\documentclass{article}
\usepackage{amsxtra}
\usepackage{amsmath}
\usepackage{mathtools}
\usepackage{amsfonts}
\usepackage{amsthm}
\newtheorem{theorem}{Theorem}
\newtheorem{lemma}{Lemma}

\newtheorem{remark}{Remark}

\newtheorem{myassump}{Assumption}

\def\argmin{\mathop{\rm \boldmath argmin}}

\usepackage{algorithm}
\usepackage{algorithmic}
  \usepackage[preprint,nonatbib]{neurips_2026}
  \usepackage[numbers]{natbib}

\usepackage[utf8]{inputenc} 
\usepackage[T1]{fontenc}    
\usepackage{hyperref}       
\usepackage{url}            
\usepackage{booktabs}       
\usepackage{amsfonts}       
\usepackage{nicefrac}       
\usepackage{microtype}      
\usepackage{xcolor}         

\title{Robust stochastic first order methods in heavy-tailed noise via medoid mini-batch gradient sampling}

\author{%
  Manojlo Vukovic \\
  Faculty of Technical Sciences,\\
  University of Novi Sad, Novi Sad, Serbia, \\
  \texttt{manojlo.vukovic@uns.ac.rs} \\
  \And
  Dusan Jakovetic \\
  Faculty of Sciences,\\ 
  University of Novi Sad, Novi Sad, Serbia,\\
  \texttt{dusan.jakovetic@dmi.uns.ac.rs} \\
}

\begin{document}

\maketitle

\begin{abstract}
  We consider a first order stochastic optimization framework where, at each iteration, $K$ independent identically distributed (i.i.d.) data point samples are drawn, based on which stochastic gradients can be queried. We allow gradient noise to be heavy-tailed, with possibly infinite variances. For the considered heavy-tailed setting, many algorithmic variants have recently been proposed based on gradient clipping or other nonlinear operators (e.g., normalization) applied over noisy gradients. In this paper, we take an alternative approach and propose a novel stochastic first order method dubbed Robust Stochastic Gradient Descent with medoid mini-batch gradient sampling, R-SGD-Mini for short. The core idea of R-SGD-Mini is to split the $K$-sized data batch into $M$ distinct data chunks, form for each chunk the stochastic gradient, and update the solution estimate with respect to the stochastic gradient direction of the chunk that is medoid of gradients of all data-chunks. Under a general class of symmetric heavy-tailed gradient noises and a standard non-convex setting, we establish explicit bounds on the expected time-averaged squared gradient norm. More precisely, we show that the latter quantity converges at rate $\mathcal{O}(T^{-1})$ to a small neighborhood of zero; we explicitly characterize this neighborhood in terms of noise and algorithm's parameters. Moreover, if the time horizon is known in advance, we establish the rate of $\mathcal{O}(T^{-\frac{1}{2}}).$ Furthermore, when clipping is incorporated, we obtain convergence guaranties in the high-probability sense and recover the same rate. Experimental results indicate that R-SGD-Mini and its clipped variant consistently perform favorably compared to SGD, clipped SGD and Median-of-Means based methods.
\end{abstract}

\section{Introduction}

First order stochastic optimization and stochastic gradient descent (SGD) have been widely used in many machine learning applications. 
    Initial SGD studies date back from~\cite{robinsmonro}, while the interest for the algorithm and its variants has been renewed thanks to many applications, including  widespread use in training large scale machine learning models~\cite{ghadimi,nemirovski}.

    Traditionally, SGD has been analyzed assuming that the gradient noise has a light-tailed distribution or at least has a finite variance.  However, in recent works~\cite{zhangHT,simsekliHT,nairHT,CSGDHT}, it has been shown that gradient noise may exhibit heavy tails when training certain deep learning models. 
%
 %
   This has motivated a surge of research recently on the development and analysis of SGD and its variants under heavy-tailed noise settings, e.g.,~\cite{CSGDHT,HTASGD,dusan,armacki,puchkinMedian}. Most works tackle the problem by applying a nonlinear mapping over the stochastic gradient direction before it is incorporated in the model update. For this, the clipping operator is dominantly used \cite{CSGDHT,HTASGD,puchkinMedian}, while other forms of more general nonlinearities such as normalization, sign, and quantization are also applied \cite{dusan,armacki}. 
  In these approaches, a core idea is that, by truncating excessively large values of a stochastic gradient, the effect of the noise is limited. 
  This also comes at the price of some information loss, as a portion of information regarding the noiseless gradient can also be lost. The studies then show that the tradeoff can be controlled carefully to achieve provable convergence and convergence rate guarantees in heavy-tailed settings.

    In this paper, we follow a very different approach in order to tackle heavy-tailed noise in stochastic first order optimization and propose a new method called Robust Stochastic Gradient Descent with medoid mini-batch gradient sampling, R-SGD-Mini for short. R-SGD-Mini assumes a stochastic optimization setting where at each iteration~$t$, stochastic gradients can be evaluated based on $K$ independent, identically distributed (i.i.d.) data samples drawn from an unknown data distribution. With R-SGD-Mini, the $K$-sized data batch is then split into $M$ distinct data chucks, and for each  chunk, the stochastic gradient is formed; more precisely, they are set to equal the chunk-based averages of the per-data point and gradients. The next solution estimate is then chosen by using the stochastic gradient step that corresponds to the chunk for which the gradient is the medoid\footnote{The medoid of sequence $\{\mathbf{a}_j\}_{j=1}^M$ of $M$ vectors is $\mathbf{a}_{j^\star}$ such that $j^\star=\argmin\limits_j\sum\limits_{i=1}^M \| \mathbf{a}_i-\mathbf{a}_j\|,$
    i.e., $\mathbf{a}_{j^\star}$ has the smallest distance to the other elements in the sequence.}  of all chunk gradients.
    
    A key insight behind R-SGD-Mini is that, in a crude approximation (formal and detailed analysis is given in Theorems~\ref{theo:variance} and~\ref{theorem:convergencetostationarity} ahead), the ``effective gradient noise'' that plays a role in the dynamics of the algorithm is the medoid noise of the $M$ noises that correspond to the different data chunks. This is in contrast with the standard SGD wherein the ``effective gradient noise'' is the average of the per-chunk gradient noises. It then turns out that the medoid noise can have finite variance even when the individual noises have infinite variance, leading to efficient operation of R-SGD-Mini in heavy-tailed noise.


    Assuming that the population (expected) loss is non-convex and has Lipschitz continuous gradients, we establish bounds on the expected time-averaged squared gradient norm under a general class of symmetric heavy-tailed gradient noises. To be more precise, we show that $\frac{1}{T}\sum\limits_{t=0}^T\mathbb{E}[\|\nabla F(\mathbf{x}^t)\|^2]$ converges at rate $T^{-1}$ to a neighborhood of zero. 
    Moreover, if the time horizon $T$ is known in advance and we set step size $\alpha=a T^{-\frac{1}{2}},$ for $a>0$, we obtain the optimal result, i.e., $\frac{1}{T}\sum\limits_{t=0}^T\mathbb{E}[\|\nabla F(\mathbf{x}^t)\|^2]\leq \mathcal{O}(T^{-\frac{1}{2}}),$ which is the same as for SGD in the light-tailed setting.
    Moreover, to derive high-probability guarantees, we incorporate gradient clipping into the R-SGD-Mini algorithm in the same manner as in~\cite{nguyen2023improved}\footnote{Here, the clipping threshold and the step size are chosen optimally to balance the bound. Consequently, by applying clipping to our gradient estimator with these choices, and using the fact that it admits a finite second moment, we recover the optimal convergence rate.}. With this modification, we obtain a convergence rate of $\mathcal{O}(T^{-\frac{1}{2}})$ with high probability. We refer to this modified algorithm as R-CSGD-Mini. Numerical results show favorable performance of proposed methods compared to SGD, clipped SGD and Median-of-Means based methods, with higher accuracy and greater robustness to outliers.

We now briefly review the literature to help us contrast our work with existing approaches.  In this paper, we work under the standard assumption used to model heavy-tailed noise, namely we consider the bounded (central) $p$-th moment assumption with $p\in (1,2),$ which is
     $\mathbb{E}\left[\|\nabla f^t(\mathbf{x})-\nabla F(\mathbf{x})\|^p \right]\leq \sigma^p,$
where $\nabla F(\mathbf{x})$ and $\nabla f^t(\mathbf{x})$  are  the gradient of the objective function and stochastic gradient, respectively and $\sigma>0.$ 
In addition, our analysis relies on the assumption that the noise appearing in the stochastic gradient is symmetric.
Using the bounded $p$-th moment assumption, there has been a number of recent works in non-convex optimization, for example~\cite{CSGDHT,zhangHT,armacki,cutkosky2021high,liu2023breaking,nguyen2023improved}. These works study SGD in the presence of clipping and other nonlinear operators and provide various convergence results for the methods proposed therein. In contrast to these works, in R-SGD-Mini we do not perform any clipping or normalization and obtain a rate of $\mathcal{O}(T^{-\frac{1}{2}})$ in expectation and in R-CSGD-Mini when clipping is applied, we obtain a rate of $\mathcal{O}(T^{-\frac{1}{2}})$ in high probability. On the other hand, works such as~\cite{CSGDHT,zhangHT} achieve a rate of $\mathcal{O}(T^{-\frac{2p-2}{3p-2}})$ in expectation, while~\cite{cutkosky2021high,liu2023breaking,nguyen2023improved} establish the same rate under high-probability bounds. Consequently, under the assumption of symmetric gradient noise, our approach yields an improved rate whenever $p<2.$ It is worth emphasizing that the state-dependent symmetric noise assumption is not unique to this work and has been previously considered in the prior works, including~\cite{chen2020understanding,bernstein2018signsgd,barsbey2021heavy,battash2024revisiting,armacki2025high,bernstein2018signsgd2}. 
Moreover,~\cite{barsbey2021heavy,gurbuzbalaban2021heavy,simsekli2019tail,peluchetti2020stable} theoretically show that symmetric heavy-tailed noise is a suitable noise model for many practical settings.
Under this assumption, the very recent reference~\cite{armacki} shows 
for a non-convex setting and a diminishing  step size sequence, a $\mathcal{O}(\log(t) /\sqrt{t})$ convergence rate for nonlinear SGD--a generalization of clipped SGD. However, in contrast to our approach,~\cite{armacki} is based on  a diminishing step size and has a logarithmic factor in rate.
Furthermore, in contrast to the previously mentioned works, 
due to the use of the median-of-means-type estimator, the effective variance can be controlled and made arbitrarily small by increasing the number of data points $R$ in each data chunk. To be more precise, we established that the effective gradient noise variance is $\mathcal{O}(R^{-\frac{2p-2}{p}}).$
A similar idea in the convex setting is considered in~\cite{puchkinMedian} where the authors derive a rate of $\mathcal{O}(T^{-\frac{1}{2}})$ in high probability. More precisely, their method applies gradient clipping after finding element-wise gradient median of means. Moreover, in~\cite{puchkinMedian}, the authors do not rely on the bounded $p$-th moment assumption, nor on the assumption of symmetric noise. 
We further emphasize a key distinction from~\cite{puchkinMedian}: our gradient selection operates by choosing a single gradient from the set of chunk gradients, while~\cite{puchkinMedian} employs an element-wise median of gradients across chunks. As discussed in Section~\ref{sec:examples}, the element-wise median may behave unfavorably when the number of chunks is even. On the other hand, in both our method and~\cite{puchkinMedian}, increasing the number of data chunks $M$ effectively reduces the impact of heavy-tailed noise. However, this leads to different practical choices: if $M=3$ is insufficient, our method naturally considers $M=4$, while~\cite{puchkinMedian} should instead move to $M=5,$ which increases the computational cost. Although a natural extension of the element-wise median is the geometric median, which is coordinate-invariant, we do not pursue this approach as it requires solving an additional optimization problem whose complexity increases with the dimension. Instead, we approximate the geometric median by selecting the gradient that is the medoid of all chunk gradients and show that 
the medoid choice has similar properties as the geometric median, i.e., has a finite second moment even though the noise itself does not.
It can also be seen in Section~\ref{sec:examples} that computing the element-wise median is more time-consuming than computing the medoid.
Mean estimators such as median, median of means and geometric median have been extensively studied in the problems of robust mean estimation and robust machine learning 
recently, e.g.~\cite{minsker2015geometric,lugosi2019mean,lugosi2019sub}. To the best of our knowledge, the medoid of means has not been used in optimization in the presence of heavy-tailed noise. On the other hand, $k$-medoids have been used in clustering~\cite{schubert2019faster,schubert2021fast}. In this work,

\textbf{Paper organization}. The paper is structured as follows. Section~\ref{sec:assandalg} presents all assumptions and algorithm R-SGD-Mini. In Section~\ref{sec:preliminaries} some preliminary results are provided. Section~\ref{sec:main} presents the main results. In Section~\ref{sec:examples} we present numerical experiments. The conclusion is given in Section~\ref{sec:conlusion}.
 
	\textbf{Notation}. 
    We denote by $\mathbb R$ the set of real numbers and by ${\mathbb R}^d$ the $d$-dimensional
	Euclidean real coordinate space. We use normal lower-case letters for scalars,
	lower case boldface letters for vectors, and upper case boldface letters for
	matrices. Further, we use
	$\mathbf{a}^\top \mathbf{b}$
	for the inner products of vectors 
	$\mathbf{a}$ and $\mathbf{b}$.
	We further denote by:
	$\|\cdot\|=\|\cdot\|_2$ the Euclidean norm of its vector argument; $\nabla h(\mathbf{w})$ and the gradient evaluated at $w$ of a function $h: {\mathbb R}^m \rightarrow {\mathbb R}$, $m > 1$; $ P(\mathcal A)$ and $\mathbb E[u]$ the probability of
	an event $\mathcal A$ and expectation of a random variable $u$, respectively.
    Notation $\mathcal{O}(\cdot)$ stands for the standard “big O”, i.e., for two non-negative sequences $\{a_t\}$ and $\{b_t\},$ we have $a_t = \mathcal{O}(b_t)$ if there exist $C >0$ and $t_0 \in\mathbb{N}$, such that $a_t \leq Cb_t$, for all $t\geq t_0.$

\section{Problem model and proposed algorithm}
	\label{sec:assandalg}
    We consider the following optimization problem:
    \begin{align}\label{eq:fullproblem}
    \argmin\limits_{\mathbf{x}\in\mathbb{R}^d}F(\mathbf{x})=\mathbb{E}_{\boldsymbol{\xi}} [f(\mathbf{x},\boldsymbol{\xi})].
    \end{align}
    Here, $\mathbf{x}\in\mathbb{R}^d$ corresponds to the model parameters to be learned, $f:\mathbb{R}^d\times\Xi\to \mathbb{R}$ is a per-data point loss function, $\boldsymbol{\xi}$ is a random variable drawn from (unknown) distribution $\Xi$, and $F:\mathbb{R}^d\to \mathbb{R}$ is the population loss. 
    
    We make the standard $L$-smoothness assumption on $F(\mathbf{x}).$
    \begin{myassump}\label{as:Lipshitzgradient}
        Function $F:\mathbb{R}^d\to \mathbb{R}$ is $L$-smooth for some $L>0$, i.e., its gradient is $L$-Lipshitz, i.e., for all $\mathbf{x},\mathbf{y}\in\mathbb{R}^d$ we have that 
        \begin{align*}
            \|\nabla F(\mathbf{x})-\nabla F(\mathbf{y})\|\leq L \|\mathbf{x}-\mathbf{y}\|.
        \end{align*}
    \end{myassump}
    \begin{remark}
        Assumption~\ref{as:Lipshitzgradient} gives us the following quadratic upper bound for all $\mathbf{x},\mathbf{y}\in\mathbb{R}^d$ 
    \begin{align*}
        F(\mathbf{y})\leq F(\mathbf{x})+\nabla F^\top(\mathbf{x}) (\mathbf{y}-\mathbf{x})+\frac{L}{2}\|\mathbf{y}-\mathbf{x}\|^2.
    \end{align*}
    \end{remark}
    To solve problem~\eqref{eq:fullproblem}, the standard SGD method can be applied, where instead of using the true gradient $\nabla F(\mathbf{x})$ of population loss $F(\mathbf{x}),$ one uses its estimate $\nabla f(\mathbf{x}).$ That is, there is noise $\boldsymbol{\nu}(\mathbf{x})$ that arises from estimating the true gradient using a finite sample, i.e., we have that the gradient estimate $\nabla f(\mathbf{x})$ can be written as $\nabla f(\mathbf{x})=\nabla F(\mathbf{x})  + \boldsymbol{\nu}(\mathbf{x}).$ Gradient noise, depending on the sampling method and the true gradient estimator, can have various properties. It is usually assumed that the noise $\boldsymbol{\nu}(\mathbf{x})$ has zero mean and that its variance is finite or at least bounded by some function of $\mathbf{x}.$ However, due to the development of deep neural networks and the use of very large datasets, the assumption of bounded variance is no longer applicable. That is, heavy-tailed noise is increasingly considered, which in fact is the class of noises that may have infinite variance. In such a setting, it can be shown that the standard SGD diverges (see, for example, Appendix B in~\cite{dusan}). Therefore, an algorithm should be designed such that the effective noise that appears in the algorithm has a finite variance and mean that is zero or at least as close to zero as possible.
    
    To do that, in this paper, we consider the following strategy: at each iteration sample $K$ data samples and split the data batch into $M=\frac{K}{R}$ distinct data chunks and form for each chunk $j=1,2,...,M$ the corresponding stochastic gradient $\nabla f_j^t(\cdot)=\frac{1}{R}\sum\limits_{k=1}^R \nabla f(\cdot,\boldsymbol{\xi}_{j_k}),$ and choose $j^\star$ for which the corresponding noise exhibits the best characteristics with respect to the predefined metric. Before we proceed, let us introduce the notation for gradient noise for each of the data chunk $j=1,2,..,M:$
    \begin{align}
        \nabla f_j^t(\mathbf{x}) &=\nabla F(\mathbf{x})  + \boldsymbol{\nu}_j^t(\mathbf{x}).\label{eqn:gradnoise}
    \end{align}
    Thus, we need to design a strategy that selects a stochastic gradient $\nabla f_j^t$ whose corresponding noise term $\boldsymbol{\nu}_j^t(\mathbf{x})$ exhibits the most favorable characteristics. For example, one can choose $j^\star$ such that 
    \begin{align}\label{eqn:badjstar}
        j^\star= \argmin_j \|\nabla f_j^t(\mathbf{x}^t) \|= \argmin_j \|\nabla F(\mathbf{x})  + \boldsymbol{\nu}_j^t(\mathbf{x})\|.
    \end{align}
    However, even though it would provide that effective variance is finite, it would make that mean of $\boldsymbol{\nu}_{j^\star}^t(\mathbf{x})$ is not zero nor close to zero 
    \begin{remark} \label{rem:badjstar}
        Notice that if we had i.i.d. zero-mean symmetric random vectors $\boldsymbol{\nu}_1,\boldsymbol{\nu}_2,...,\boldsymbol{\nu}_M$ and $j^\star$ is chosen such that
        \begin{align}\label{eqn:badjstar2}
             j^\star= \argmin_j \|\boldsymbol{\nu}_j\|,
        \end{align}
        we would have $\mathbb{E}[\boldsymbol{\nu}_{j^\star}]=0.$ However, we have that $j^\star$ in~\eqref{eqn:badjstar} is not the same as~\eqref{eqn:badjstar2}, i.e., even tough we have
            $\min_j \|\nabla f_j^t(\mathbf{x}^t) \| = \min_j \|\nabla F(\mathbf{x})  + \boldsymbol{\nu}_j^t(\mathbf{x})\| \leq \|\nabla F(\mathbf{x})\| + \min_j \|\boldsymbol{\nu}_j^t(\mathbf{x})\|,$
        we do not have
            $\argmin_j \|\nabla f_j^t(\mathbf{x}^t) \| =\argmin_j  (\|\nabla F(\mathbf{x})\| + \|\boldsymbol{\nu}_j^t(\mathbf{x})\|).$
    \end{remark}
    Thus, in the view of Remark~\ref{rem:badjstar}, the main difficulty in finding $j^\star$ such that effective noise retains nice properties is the true gradient term $\nabla F(\mathbf{x}).$ This means that we have to neglect the influence of $\nabla F(\mathbf{x})$ in the process of finding $j^\star.$ One possible way to achieve this is to find $j^\star$ in the following manner
    \begin{align*}
         j^\star= \argmin_j \sum\limits_{i=1}^N \Psi( \nabla f_i^t(\mathbf{x}) - \nabla f_j^t(\mathbf{x}))= \argmin_j \sum\limits_{i=1}^N \Psi(\boldsymbol{\nu}_i^t(\mathbf{x})- \boldsymbol{\nu}_j^t(\mathbf{x})),
    \end{align*}
    where $\Psi:\mathbb{R}^d\to\mathbb{R}$ is some function.
    \begin{remark}\label{rem:choicePsi}
    Let $\boldsymbol{\nu}_1, \boldsymbol{\nu}_2,..., \boldsymbol{\nu}_M$ be i.i.d. random variables. Then, depending on the choice of the function $\Psi$ in 
    \begin{align*}
        \mathbf{y}=\argmin_{\mathbf{y}\in\mathbb{R}^d} \sum\limits_{i=1}^N \Psi( \boldsymbol{\nu}_i - \mathbf{y})
    \end{align*}
    we obtain different estimators of the mean. For example, 1) if $\Psi(\cdot)=\|\cdot\|_1,$ we get the element-wise median;
     2) if $\Psi(\cdot)=\|\cdot\|^2,$ we get the mean;
      3) if $\Psi(\cdot)=\|\cdot\|,$ we get the geometric median;  
    \end{remark}
     We deal with the case when the function $\Psi$ is chosen such that $\Psi(\cdot)=\|\cdot\|,$ i.e.,
    \begin{align*}
         j^\star= \argmin_j \sum\limits_{i=1}^N \| \nabla f_i^t(\mathbf{x}) - \nabla f_j^t(\mathbf{x})\| = \argmin_j \sum\limits_{i=1}^N \| \boldsymbol{\nu}_i^t(\mathbf{x})- \boldsymbol{\nu}_j^t(\mathbf{x})\|.
    \end{align*}
    That means we choose the data chunk whose gradient is medoid of $\nabla f_i^t(\mathbf{x})$ for $i=1,2,...,M$, that is the gradient $\nabla f_{j^\star}^t(\mathbf{x})$ that is closest to all the other gradients $\nabla f_i^t(\mathbf{x})$ with respect to $\|\cdot\|$. Even though, it is not true geometric median, it exhibits similar properties as geometric median itself, as it can be seen in Section~\ref{sec:preliminaries}. Moreover, notice that finding $j^\star$ is computationally cheaper than finding the true geometric median.

    Therefore, in this paper we consider an algorithm whose pseudo code is provided below.
    \begin{algorithm}[H]
    \caption{R-SGD-Mini (\underline{R}obust \underline{S}tochastic \underline{G}radient Descent with medoid \underline{mini}-batch gradient sampling}\label{alg1}
    \textbf{Require}: initial value $\mathbf{x}^0,$ learning rate $\alpha,$ sample size $K,$ the number of data points in each data chunk $R$ ($M=\frac{K}{R}$)
    \begin{algorithmic}[1]
    \STATE Set $t=0$
    \WHILE{exit criteria is not satisfied}
    \STATE Sample $K$ data samples $\boldsymbol{\xi}_1, \boldsymbol{\xi}_2,...,\boldsymbol{\xi}_K$ from distribution $\Xi$, and split them into $M=\frac{K}{R}$ equally-sized data chunks 
    \STATE Set that $\nabla f_j^t(\cdot)=\frac{1}{R}\sum\limits_{k=1}^R \nabla f(\cdot,\boldsymbol{\xi}_{j_k})$ for $j=1,2,...,M$
    \STATE Find index $j^\star$ such that
       $j^\star= \argmin_j \sum\limits_{i=1}^N \| \nabla f_i^t(\mathbf{x}) - \nabla f_j^t(\mathbf{x})\|$
    \STATE Set $\mathbf{x}^{t+1}=\mathbf{x}^t-\alpha \nabla f_{j^\star}^t(\mathbf{x}^t),$
    \STATE Set $t=t+1.$
    \ENDWHILE
    \end{algorithmic}
    \end{algorithm}
    To complete this section, we state the assumptions imposed on the gradient noise.

    \begin{myassump}\label{as:gradientnoise} \textbf{Gradient noise:}
        1) For every fixed $\mathbf{x},$ the per-point gradient noises $\boldsymbol{\nu}_{\boldsymbol{\xi}}(\mathbf{x})=\nabla f(\mathbf{x},\boldsymbol{\xi})-\nabla F(\mathbf{x})$ are absolutely continuous and identically distributed;
        2) For every fixed $\mathbf{x},$ $\boldsymbol{\nu}_{\boldsymbol{\xi}}(\mathbf{x})$ and $-\boldsymbol{\nu}_{\boldsymbol{\xi}}(\mathbf{x})$ have the same distribution
        3) For every fixed $\mathbf{x}$ and every $K,$ $\{\boldsymbol{\nu}_{\boldsymbol{\xi}_i}(\mathbf{x})\}_{i=1}^K$ are mutually independent.
        4) There exist constants $\sigma>0$ and $p>1$ such that
            $\sup\limits_{\mathbf{x}}\mathbb{E}\left[ \|\boldsymbol{\nu}_{\boldsymbol{\xi}}(\mathbf{x})\|^p \right] \leq \sigma^p$
    \end{myassump}
   Assumption~\ref{as:gradientnoise} allows us to consider a wide range of gradient noises. It is required that the per-point gradient noise has finite moment of order $p>1$ and has zero mean. However, as $p$ can be as small as a number arbitrarily close to $1,$ we actually allow for very heavy noise tails. In particular, the gradient noise variance may be equal to infinity. 

    Based on this assumption, we have that the moment of order $p$ of gradient noise for each data chunk $j=1,2,...,M$ is given by
   $\sup\limits_{\mathbf{x}}\mathbb{E}\left[ \|\boldsymbol{\nu}_j^t(\mathbf{x})\|^p \right] \leq \frac{1}{R^{p-1}} \sup\limits_{\mathbf{x}}\mathbb{E}\left[ \|\boldsymbol{\nu}_{\boldsymbol{\xi}}(\mathbf{x})\|^p \right] \leq \frac{\sigma^p}{R^{p-1}}.$

   Utilizing Markov inequality, we have that probability tails for gradient noise $\boldsymbol{\nu}_j^t(\mathbf{x})$ for all $j=1,2,...,M$ are given by
   \begin{align}\label{eqn:datachunktail}
       \sup\limits_{\mathbf{x}} P\{ \|\boldsymbol{\nu}_j^t(\mathbf{x})\|>u\} \leq \frac{\sigma^p}{R^{p-1}u^p}.
   \end{align}

    Throughout this paper, we consider the filtration $\mathcal{F}_t$, $t= 1,2,...,$ where $\mathcal{F}_t$ is the $\sigma$-algebra generated by $\{ \boldsymbol{\nu}_j^s\}_{s=0}^{t-1}$ for $j=1,2,...,M.$

\section{Intermediate results}\label{sec:preliminaries}
This section provides some intermediate technical results that are required to later carry our convergence analysis of Algorithm~\ref{alg1}. For simplicity of presentation and notation, throughout this section, we drop the dependencies of gradient noise $\boldsymbol{\nu}_j^t(\mathbf{x})$ on $\mathbf{x}$  and $t$. Dropping the iteration $t$ and  $\mathbf{x}$ is justified since we analyze the behavior at a fixed iteration. 
    
Let us denote by $D(\boldsymbol{\nu})$ the sum of distances between point $\boldsymbol{\nu}$ and points $\boldsymbol{\nu}_i,i=1,2,...,M,$ i.e., $D(\boldsymbol{\nu})=\sum_{i=1}^M\|\boldsymbol{\nu}_i-\boldsymbol{\nu}\|.$ Let $\boldsymbol{\nu}_{j^\star}$ be one among all $i=1,2,...,M$ that minimizes $D(\boldsymbol{\nu}),$ i.e.  $j^\star=\argmin_j D(\boldsymbol{\nu}_j).$ As already mentioned, that $\boldsymbol{\nu}_{j^\star}$ is called medoid.
We start with the following lemma, which establishes that if medoid $\boldsymbol{\nu}_{j^\star}$ is far from a given point $\mathbf{z},$ then a fraction of the points $\boldsymbol{\nu}_i,i=1,2,...,M,$ is also far from that point $\mathbf{z}.$
Unlike~\cite{minsker2015geometric} for a fraction of the points to also be sufficiently distant from $\mathbf{z}$, the distance of the medoid from point $\mathbf{z}$ also depends on the total number of points considered.
 Moreover, in the following lemma we additionally assume that the function $D(\boldsymbol{\nu})$ does not have the same value for any two $\boldsymbol{\nu}_j,\boldsymbol{\nu}_k $ for $i\neq k$, which was not required for geometric median result.
\begin{lemma}\label{lemma:farenough}
    Let $\boldsymbol{\nu}_1,\boldsymbol{\nu}_2,...,\boldsymbol{\nu}_M \in\mathbb{R}^d$ such that $D(\boldsymbol{\nu}_j)\neq D(\boldsymbol{\nu}_k)$ for all $i\neq k$ and let $\boldsymbol{\nu}_{j^\star}$ be their medoid. Fix $\gamma\in(0,\frac{1}{2})$ and $M\in\mathbb{N}$ and assume that $\mathbf{z}\in\mathbb{R}^d$ is such that $\|\boldsymbol{\nu}_{j^\star}-\mathbf{z}\| > C_{\gamma,M} r,$
    where
        $C_{\gamma,M}=\frac{(3-2\gamma)M+3}{(1-2\gamma)M}$
    and $r>0.$ Then there exists a subset $J\subseteq \{1,2,...,M\}$ of cardinality $|J|>\gamma M$ such that for all $j\in J,$ $\|\boldsymbol{\nu}_j-\mathbf{z}\|>r.$
\end{lemma}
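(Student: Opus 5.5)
We argue by contraposition. Suppose that the set $J=\{j:\|\boldsymbol{\nu}_j-\mathbf{z}\|>r\}$ has cardinality $|J|\le \gamma M$. Let $S=\{1,\dots,M\}\setminus J$ be the complementary set of ``close'' points, with $s:=|S|\ge (1-\gamma)M>M/2$; in particular $S\neq\emptyset$. Set $a:=\|\boldsymbol{\nu}_{j^\star}-\mathbf{z}\|$. The goal is to show $a\le C_{\gamma,M}r$. The only property of the medoid we use is $D(\boldsymbol{\nu}_{j^\star})\le D(\boldsymbol{\nu}_k)$ for every $k$. We apply it with some fixed $k\in S$ and compare an upper bound on $D(\boldsymbol{\nu}_k)$ with a lower bound on $D(\boldsymbol{\nu}_{j^\star})$, both obtained from the triangle inequality through $\mathbf{z}$.

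\emph{Upper bound.} For $i\in S$ we have $\|\boldsymbol{\nu}_i-\boldsymbol{\nu}_k\|\le \|\boldsymbol{\nu}_i-\mathbf{z}\|+\|\mathbf{z}-\boldsymbol{\nu}_k\|\le 2r$. For $i\in J$ we have $\|\boldsymbol{\nu}_i-\boldsymbol{\nu}_k\|\le \|\boldsymbol{\nu}_i-\mathbf{z}\|+r$. Hence
\begin{align*}
D(\boldsymbol{\nu}_k)\le 2rs+(M-s)r+\sum_{i\in J}\|\boldsymbol{\nu}_i-\mathbf{z}\|.
\end{align*}

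\emph{Lower bound.} For $i\in S$ we have $\|\boldsymbol{\nu}_i-\boldsymbol{\nu}_{j^\star}\|\ge a-r$. For $i\in J$ we use the reverse triangle inequality in the form $\|\boldsymbol{\nu}_i-\boldsymbol{\nu}_{j^\star}\|\ge \|\boldsymbol{\nu}_i-\mathbf{z}\|-a$. Hence
\begin{align*}
D(\boldsymbol{\nu}_{j^\star})\ge s(a-r)-(M-s)a+\sum_{i\in J}\|\boldsymbol{\nu}_i-\mathbf{z}\|.
\end{align*}
The unknown, possibly huge, distances $\|\boldsymbol{\nu}_i-\mathbf{z}\|$ for $i\in J$ appear with the same sign in both bounds. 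This is the key point: it is why we route the $J$-terms through $\mathbf{z}$ rather than bounding them crudely.

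\emph{Combining.} From $D(\boldsymbol{\nu}_{j^\star})\le D(\boldsymbol{\nu}_k)$ the $J$-sums cancel, and we obtain $(2s-M)a\le r(M+2s)$. Since $2s-M>0$, this gives $a\le r\,\frac{M+2s}{2s-M}$. The map $s\mapsto \frac{M+2s}{2s-M}$ is decreasing on $s>M/2$, so substituting $s\ge(1-\gamma)M$ yields
\begin{align*}
a\le \frac{(3-2\gamma)M}{(1-2\gamma)M}\,r\le C_{\gamma,M}\,r.
\end{align*}
This contradicts the hypothesis $a>C_{\gamma,M}r$, so $|J|>\gamma M$.

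The main points needing care are twofold. First, we must verify the integer bookkeeping: the negation of $|J|>\gamma M$ gives $s\ge M-\lfloor\gamma M\rfloor\ge(1-\gamma)M$. Second, if $j^\star$ itself lies in $S$, then $a\le r$ trivially. The extra ``$+3$'' in $C_{\gamma,M}$ leaves slack for these edge cases. The distinctness assumption on the values $D(\boldsymbol{\nu}_j)$ only ensures that the medoid is well defined; the argument above does not otherwise rely on it.
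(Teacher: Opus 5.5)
Your proof is correct and is essentially the paper's argument: you compare $D(\boldsymbol{\nu}_{j^\star})$ with $D(\boldsymbol{\nu}_k)$ for a close index $k$, using the same per-term triangle-inequality bounds ($a-3r$ on close indices, $-(a+r)$ on far ones), and arrive at the same inequality $(2s-M)a\le(2s+M)r$. Your bookkeeping is cleaner than the paper's in two ways. You keep $s=|S|\ge(1-\gamma)M$ exact and use monotonicity, which avoids the paper's floor/ceiling step; that step's WLOG of $\lfloor(1-\gamma)M\rfloor+1$ close points actually fails when $\gamma M$ is an integer. You also get the sharper constant $\frac{3-2\gamma}{1-2\gamma}$, so the ``$+3$'' in $C_{\gamma,M}$ is not needed at all.
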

\textit{The proof of Lemma~\ref{lemma:farenough} is given in Appendix~\ref{app:farenoguh}.}

The following lemma guaranties that the condition $D(\boldsymbol{\nu}_j)= D(\boldsymbol{\nu}_k)$ for all $i\neq k$ we imposed for arbitrary points $\boldsymbol{\nu}_i,i=1,2,...,M,$ in the $\mathbb{R}^d$ occurs with probability zero when they are absolutely continuous random variables.

\begin{lemma}\label{lemma:absolutecontinuous}
    Let $\boldsymbol{\nu}_i,i=1,2,...,M$ be i.i.d. absolute continuous random variables. We have that $P\{D(\boldsymbol{\nu}_i) =D(\boldsymbol{\nu}_j)\} =0 $ for all $i\neq j.$
\end{lemma}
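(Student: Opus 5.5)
Fix $i\neq j$. Since the $\boldsymbol{\nu}_k$ are i.i.d. and absolutely continuous, the random vector $(\boldsymbol{\nu}_1,\dots,\boldsymbol{\nu}_M)\in\mathbb{R}^{dM}$ has a joint density (the product of the marginal densities). Hence it suffices to show that the set
\begin{align*}
Z=\Big\{(\mathbf{v}_1,\dots,\mathbf{v}_M)\in\mathbb{R}^{dM}:\ \sum_{k=1}^M\|\mathbf{v}_k-\mathbf{v}_i\|=\sum_{k=1}^M\|\mathbf{v}_k-\mathbf{v}_j\|\Big\}
\end{align*}
has Lebesgue measure zero in $\mathbb{R}^{dM}$. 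Note that the terms $k=i$ and $k=j$ contribute $\|\mathbf{v}_j-\mathbf{v}_i\|$ to both sides, so they cancel. Thus $Z$ is the zero set of
\begin{align*}
g(\mathbf{v})=\sum_{k\neq i,j}\big(\|\mathbf{v}_k-\mathbf{v}_i\|-\|\mathbf{v}_k-\mathbf{v}_j\|\big).
\end{align*}

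If $M=2$, then $g\equiv 0$ and the claim would fail. So I will assume $M\ge 3$, which is the relevant case, and note the exception. For $M\ge3$ I would use Fubini. Fix a $k_0\neq i,j$ and condition on all vectors except $\mathbf{v}_{k_0}$; for almost every such choice we have $\mathbf{v}_i\neq\mathbf{v}_j$. It then suffices to show that, for fixed other coordinates with $\mathbf{v}_i\neq\mathbf{v}_j$, the slice
\[
\{\mathbf{v}_{k_0}:\ \|\mathbf{v}_{k_0}-\mathbf{v}_i\|-\|\mathbf{v}_{k_0}-\mathbf{v}_j\|=c\}
\]
has measure zero in $\mathbb{R}^d$, where $c$ is the constant coming from the remaining terms.

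This slice is a level set of the function $h(\mathbf{w})=\|\mathbf{w}-\mathbf{v}_i\|-\|\mathbf{w}-\mathbf{v}_j\|$. Geometrically it is one sheet of a hyperboloid (or a hyperplane if $c=0$, or a ray if $|c|=\|\mathbf{v}_i-\mathbf{v}_j\|$). To prove it is null cleanly, I would argue as follows. The function $h$ is real-analytic on $\mathbb{R}^d\setminus\{\mathbf{v}_i,\mathbf{v}_j\}$, which is connected when $d\ge2$. It is also nonconstant there: along the line through $\mathbf{v}_i,\mathbf{v}_j$ it takes values ranging over $[-\|\mathbf{v}_i-\mathbf{v}_j\|,\|\mathbf{v}_i-\mathbf{v}_j\|]$. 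The zero set of a nonconstant real-analytic function on a connected open set has measure zero, so the level set of $h$ at any $c$ is null. For $d=1$ the level set is a finite set or a half-line. A half-line has positive measure, so the argument needs refinement there; in that case I would instead vary a coordinate so that the total $g$ is strictly monotone. Alternatively, one can note that $h$ has gradient $\frac{\mathbf{w}-\mathbf{v}_i}{\|\mathbf{w}-\mathbf{v}_i\|}-\frac{\mathbf{w}-\mathbf{v}_j}{\|\mathbf{w}-\mathbf{v}_j\|}$, which is nonzero off the line through $\mathbf{v}_i,\mathbf{v}_j$. Hence, by the implicit function theorem, the level set is a $C^1$ hypersurface off a null set (the line) when $d\ge2$.

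Integrating the zero slice measure over the other coordinates via Fubini gives $\mathrm{Leb}(Z)=0$, hence $P\{D(\boldsymbol{\nu}_i)=D(\boldsymbol{\nu}_j)\}=0$. The main obstacle is the degenerate situations. First, $M=2$ must be excluded, since there $D(\boldsymbol{\nu}_1)=D(\boldsymbol{\nu}_2)$ always. Second, in the scalar case $d=1$, level sets of $h$ can contain half-lines. For $d=1$ I would handle this by choosing the free variable to be $\mathbf{v}_i$ itself rather than $\mathbf{v}_{k_0}$. Then $g$ is piecewise linear in $\mathbf{v}_i$ with slope $\pm1$ per term, and has at most finitely many flat pieces. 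I would then argue that, for a.e. configuration of the other points, the number of points on each side gives a nonzero slope on every piece, so the level set is finite.
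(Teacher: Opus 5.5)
Your argument is correct for $d\ge 2$ and $M\ge 3$, and you are right that $M=2$ has to be excluded. It differs from the paper's proof in which variable you leave free.

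The paper fixes $\boldsymbol{\nu}_2,\dots,\boldsymbol{\nu}_M$ and frees $\boldsymbol{\nu}_1$, one of the two compared points. The pair terms cancel, and the slice function is $G(\boldsymbol{\nu})=\sum_{k\ge3}\|\boldsymbol{\nu}_k-\boldsymbol{\nu}\|$ minus a constant. Its gradient vanishes only on the set $\mathcal{A}$ of geometric medians of $\boldsymbol{\nu}_3,\dots,\boldsymbol{\nu}_M$. The paper declares $\mathcal{A}$ null and stops, leaving implicit both the Fubini step and the fact that $\{G=0\}\setminus\mathcal{A}$ is a $C^1$ hypersurface.

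You free a third point instead, so each slice is a level set of $\|\mathbf{w}-\mathbf{v}_i\|-\|\mathbf{w}-\mathbf{v}_j\|$. Real-analyticity on the connected set $\mathbb{R}^d\setminus\{\mathbf{v}_i,\mathbf{v}_j\}$ then handles the whole level set at once, needs no facts about geometric medians, and works uniformly in $M$. What the paper's choice gains is convexity. $G$ is convex, so the only level set that can have positive measure is its minimizer set, namely the geometric medians of the other $M-2$ points. That is why the paper's choice still works in $d=1$ for odd $M$, where this set is a single point. Your slices, by contrast, contain half-lines in $d=1$, which is why you had to switch to the paper's free variable there.

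The genuine gap is your $d=1$ patch. With $v_i$ free, $g$ has slope $2a-(M-2)$ on a piece that has $a$ of the points $\{v_k\}_{k\ne i,j}$ to its left. For even $M\ge 4$, this slope is $0$ on the nondegenerate interval between the two middle points of $\{v_k\}_{k\ne i,j}$, for almost every configuration. Whenever $v_j$ also lies in that interval, the flat value of $g$ there is exactly $0$.

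So your claim of a nonzero slope on every piece for a.e.\ configuration is false for even $M$. It cannot be repaired, because the lemma itself fails there. Take $d=1$ and $M=4$. On the event $v_3<v_1,v_2<v_4$, which has probability $1/12$, one has $D(v_1)=D(v_2)=|v_1-v_2|+v_4-v_3$. For odd $M$ all slopes are odd, hence nonzero, and your patch is fine.

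The correct statement therefore requires $M\ge 3$ and, in addition, either $d\ge 2$ or $M$ odd. You should record the $d=1$, even-$M$ exception just as you recorded $M=2$. The paper's proof has both holes without noting them: for $M=2$ its $G$ is identically zero, and for $d=1$ with $M$ even its set $\mathcal{A}$ is a nondegenerate interval rather than a null set.
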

\textit{The proof of Lemma~\ref{lemma:absolutecontinuous} is given in Appendix~\ref{app:absolutecontinuous}.}

We are now ready to derive a result analogous to the result for geometric median, as in~\cite{minsker2015geometric}.
\begin{theorem}\label{theo:probabilitybound} Let $\boldsymbol{\nu}_i,i=1,2,...,M$ be i.i.d. absolute continuous random variables with mean $\boldsymbol{\mu}.$ Fix $\gamma\in(0,\frac{1}{2})$ and $M\in\mathbb{N}.$ Let $0<p<\gamma$ and $\varepsilon>0$ be such that for all $j=1,2,...,M$ $P\{ \| \boldsymbol{\nu}_j-\boldsymbol{\mu}\| > \varepsilon\}\leq p.$
If $\boldsymbol{\nu}_{j^\star}$ is their medoid, i.e., then
$P\{ \| \boldsymbol{\nu}_{j^\star}-\boldsymbol{\mu}\| > C_{\gamma,M}\varepsilon\}\leq \exp{\left(-M\psi(\gamma,p) \right)},$
where $C_{\gamma,M}$ is from Lemma~\ref{lemma:farenough} and 
$\psi(\gamma,p) = (1-\gamma)\log \frac{1-\gamma}{1-p}+\gamma \log\frac{\gamma}{p}.$
\end{theorem}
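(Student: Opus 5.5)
We follow the geometric-median argument of Minsker, with Lemma~\ref{lemma:farenough} in place of the geometric-median ``far enough'' lemma. The plan has three steps: turn the event that the medoid is far from $\boldsymbol{\mu}$ into the event that many samples are far from $\boldsymbol{\mu}$, reduce that to a binomial tail, and bound the binomial tail by a Chernoff bound in relative-entropy form.

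\textbf{Step 1 (almost-sure reduction).} Let $\mathcal{E}=\{D(\boldsymbol{\nu}_j)\neq D(\boldsymbol{\nu}_k)\ \text{for all}\ j\neq k\}$. By Lemma~\ref{lemma:absolutecontinuous} and a union bound over the finitely many pairs, $P(\mathcal{E})=1$. Take $\mathbf{z}=\boldsymbol{\mu}$ and $r=\varepsilon$ in Lemma~\ref{lemma:farenough}. On $\mathcal{E}$, the event $\{\|\boldsymbol{\nu}_{j^\star}-\boldsymbol{\mu}\|>C_{\gamma,M}\varepsilon\}$ implies that more than $\gamma M$ indices $j$ satisfy $\|\boldsymbol{\nu}_j-\boldsymbol{\mu}\|>\varepsilon$. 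Therefore
\begin{align*}
P\{\|\boldsymbol{\nu}_{j^\star}-\boldsymbol{\mu}\|>C_{\gamma,M}\varepsilon\}\leq P\Big\{\sum_{j=1}^M \mathbb{1}\{\|\boldsymbol{\nu}_j-\boldsymbol{\mu}\|>\varepsilon\}>\gamma M\Big\}.
\end{align*}

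\textbf{Step 2 (binomial domination).} The indicators are i.i.d.\ Bernoulli with common parameter $p_0\leq p$. Their sum is therefore stochastically dominated by a $\mathrm{Bin}(M,p)$ random variable $W$, for example by a monotone coupling with uniform random variables. It then suffices to bound $P\{W\geq\gamma M\}$ with $p<\gamma$.

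\textbf{Step 3 (Chernoff bound).} For $\lambda>0$, Markov's inequality gives
\begin{align*}
P\{W\geq \gamma M\}\leq e^{-\lambda\gamma M}(1-p+pe^{\lambda})^M.
\end{align*}
Minimize over $\lambda$. The optimizer is $e^{\lambda}=\frac{\gamma(1-p)}{p(1-\gamma)}$, which exceeds $1$ because $\gamma>p$. Substituting it gives the bound $\exp(-M\,\mathrm{KL}(\gamma\|p))$, where $\mathrm{KL}(\gamma\|p)=\gamma\log\frac{\gamma}{p}+(1-\gamma)\log\frac{1-\gamma}{1-p}=\psi(\gamma,p)$. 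This is the claimed bound.

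\textbf{Main obstacle.} The substance lies in Lemma~\ref{lemma:farenough} and in ruling out ties, and both are available as earlier results. What remains is bookkeeping. Lemma~\ref{lemma:farenough} gives a strict inequality $|J|>\gamma M$, which is enough for a $\geq$ Chernoff bound. The domination must still be stated carefully, since the hypothesis only bounds each tail probability by $p$ rather than fixing it. The symbol $p$ here is unrelated to the moment exponent used elsewhere in the paper.
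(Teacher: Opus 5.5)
Your proposal is correct and follows the paper's proof essentially step for step. You discard the tie event $\mathcal{E}^c$ via Lemma~\ref{lemma:absolutecontinuous}, apply Lemma~\ref{lemma:farenough} with $\mathbf{z}=\boldsymbol{\mu}$ and $r=\varepsilon$, dominate the number of far points by a $\mathrm{Bin}(M,p)$ variable, and finish with the Chernoff bound in relative-entropy form; you also make explicit the coupling and the optimization of the exponent that the paper only asserts.

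One remark applies equally to the paper: the tie-free event is not needed, and that is fortunate because ties can have positive probability (for $M=2$ one always has $D(\boldsymbol{\nu}_1)=D(\boldsymbol{\nu}_2)$). The proof of Lemma~\ref{lemma:farenough} derives the strict inequality $D(\boldsymbol{\nu}_{j^\star})>D(\boldsymbol{\nu}_k)$, and this already contradicts the minimality of the medoid even when ties occur.
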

\textit{The proof of Theorem~\ref{theo:probabilitybound} is given in Appendix~\ref{app:probabilitybound}.}

Now we will apply Theorem~\ref{theo:probabilitybound} to the gradient noises of the data chunks $\boldsymbol{\nu}_j$ for all $j=1,2,...,M.$ From~\eqref{eqn:datachunktail} we have that $P\{ \|\boldsymbol{\nu}_j^t\|>u\} \leq \frac{\sigma^p}{R^{p-1}u^p}$ and 
\begin{align}\label{eqn:p}
    P\{ \|\boldsymbol{\nu}_j^t\|>\frac{u}{C_{\gamma,M}}\} \leq \frac{\sigma^pC_{\gamma,M}^p}{R^{p-1}u^p}= \frac{\sigma^p((3-2\gamma)M+3)^p}{R^{p-1} ((1-2\gamma)M)^p u^p}=:p.
\end{align}
For $u>\frac{\sigma((3-2\gamma)M+3)}{R^{\frac{p-1}{p}}((1-2\gamma)M)\gamma^{\frac{1}{p}} }$ we have $p<\gamma.$ Thus,

\begin{align*}
    P\{ \| \boldsymbol{\nu}_{j^\star}\| > u\}\leq \exp{\left(-M\psi(\gamma,p) \right)},
\end{align*}
and
\begin{align*}
    \psi(\gamma,p) &= (1-\gamma)\log \frac{1-\gamma}{1-p}+\gamma \log\frac{\gamma}{p}
    =\log \left(\frac{1-\gamma}{1-p}\right)^{1-\gamma}\left(\frac{\gamma}{p}\right)^\gamma.
\end{align*}
 hence,
\begin{align*}
    -M\psi(\gamma,p) \leq \log \left( \left(\frac{1-p}{1-\gamma}\right)^{1-\gamma}\left(\frac{p}{\gamma}\right)^\gamma\right)^M.
\end{align*}
Now we consider the function $h(p,\gamma)=\left(\frac{1-p}{1-\gamma}\right)^{1-\gamma}\left(\frac{p}{\gamma}\right)^\gamma.$ Fix $\theta\in(0,1),$ we have
\begin{align*}
    h(p,\gamma)=\left(\frac{1-p}{1-\gamma}\right)^{1-\gamma} \frac{p^{(1-\theta)\gamma}}{\gamma^\gamma} p^{\theta \gamma}
\end{align*}
and $\left(\frac{1-p}{1-\gamma}\right)^{1-\gamma} \frac{p^{(1-\theta)\gamma}}{\gamma^\gamma}<1$ if $p<\left((1-\gamma)^{1-\gamma}\gamma^\gamma\right)^{\frac{1}{(1-\theta)\gamma}}.$ Notice that $\left((1-\gamma)^{1-\gamma}\gamma^\gamma\right)^{\frac{1}{(1-\theta)\gamma}}<\gamma.$ Therefore, for $u>q,$ for 
\begin{align}\label{eqn:q}
q :=\frac{\sigma((3-2\gamma)M+3)}{R^{\frac{p-1}{p}}((1-2\gamma)M) \left((1-\gamma)^{1-\gamma}\gamma^\gamma\right)^{\frac{1}{(1-\theta)p\gamma}} }   
\end{align}
we get
\begin{align*}
    \exp{\left(-M\psi(\gamma ,p) \right)}& \leq \left(\frac{\sigma^p((3-2\gamma)M+3)^p}{R^{p-1} ((1-2\gamma)M)^p u^p}\right)^{\theta\gamma M}=\left(\frac{\sigma^p((3-2\gamma)M+3)^p}{R^{p-1} ((1-2\gamma)M)^p }\right)^{\theta\gamma M}\frac{1}{u^{p\theta\gamma M}}
\end{align*}

Finally, we are ready to prove the following theorem, which provides bounds on the first and second moments.

\begin{theorem}\label{theo:variance}
     Suppose that $\boldsymbol{\nu}_j,$ for $j=1,2,...,M$ are random variables that satisfy Assumption~\ref{as:gradientnoise} and suppose that $\boldsymbol{\nu}_{j^\star}$ is their medoid. Fix $\gamma\in(0,\frac{1}{2}),$ $\theta\in(0,1)$ and let $M>\frac{2}{p\theta\gamma}.$ Then we have that 
     \begin{align}
         \mathbb{E}\left[\|\boldsymbol{\nu}_{j^\star}\|^2 \right] &\leq \left(\frac{\sigma((3-2\gamma)M+3)}{R^{\frac{p-1}{p}}((1-2\gamma)M) \left((1-\gamma)^{1-\gamma}\gamma^\gamma\right)^{\frac{1}{(1-\theta)p\gamma}} }\right)^2 \left(1+\frac{2}{p\theta\gamma M -2} \right).\label{eqn:truesecondmoment}
     \end{align}
\end{theorem}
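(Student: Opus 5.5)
We will bound the second moment through the layer-cake (tail integral) formula
\begin{align*}
\mathbb{E}\left[\|\boldsymbol{\nu}_{j^\star}\|^2\right]=\int_0^\infty 2u\,P\{\|\boldsymbol{\nu}_{j^\star}\|>u\}\,du,
\end{align*}
and split the integral at the threshold $q$ defined in \eqref{eqn:q}. On $[0,q]$ we use the trivial bound $P\{\cdot\}\leq 1$, which contributes at most $q^2$; this is the leading factor in \eqref{eqn:truesecondmoment}.

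On $(q,\infty)$ we use the chain of estimates derived just before the statement. By Assumption~\ref{as:gradientnoise} the chunk noises are i.i.d., absolutely continuous and have zero mean, because symmetry together with $p>1$ gives $\boldsymbol{\mu}=0$. Lemma~\ref{lemma:absolutecontinuous} guarantees that ties in $D$ occur with probability zero, so Theorem~\ref{theo:probabilitybound} applies with $\varepsilon=u/C_{\gamma,M}$ and tail level $p(u)$ from \eqref{eqn:p}. For $u>q$ we have $p(u)<\gamma$ and $h(p(u),\gamma)\leq p(u)^{\theta\gamma}$, hence
\begin{align*}
P\{\|\boldsymbol{\nu}_{j^\star}\|>u\}\leq A^{\theta\gamma M}u^{-p\theta\gamma M},\qquad A:=\frac{\sigma^pC_{\gamma,M}^p}{R^{p-1}}.
\end{align*}
One caution: the symbol $p$ is used both as the moment exponent and as the tail level, and we keep the two meanings separate.

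Integrating, since $M>\frac{2}{p\theta\gamma}$ ensures $p\theta\gamma M>2$, we get
\begin{align*}
\int_q^\infty 2u\,A^{\theta\gamma M}u^{-p\theta\gamma M}\,du=\frac{2A^{\theta\gamma M}q^{2-p\theta\gamma M}}{p\theta\gamma M-2}.
\end{align*}
Write $q=A^{1/p}/c$ with $c=\left((1-\gamma)^{1-\gamma}\gamma^\gamma\right)^{\frac{1}{(1-\theta)p\gamma}}<1$. Then $A^{\theta\gamma M}q^{-p\theta\gamma M}=c^{p\theta\gamma M}\leq 1$, so this term is at most $\frac{2q^2}{p\theta\gamma M-2}$. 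Adding the two pieces gives $q^2\left(1+\frac{2}{p\theta\gamma M-2}\right)$, which is exactly \eqref{eqn:truesecondmoment}.

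The main obstacle is verifying rigorously that $h(p,\gamma)\leq p^{\theta\gamma}$ for every $u>q$. This requires checking that $q$ forces $p(u)<\left((1-\gamma)^{1-\gamma}\gamma^\gamma\right)^{1/((1-\theta)\gamma)}$, which holds by construction, since $p(u)=A/u^p<A/q^p=c^p$. It also requires that this threshold is below $\gamma$, so that Theorem~\ref{theo:probabilitybound} is applicable; we take this from the remark preceding the statement. We also need the factor $\left(\frac{1-p}{1-\gamma}\right)^{1-\gamma}$ to be handled correctly, which we do by absorbing it into the condition that the prefactor is less than $1$.
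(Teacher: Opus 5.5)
Your proposal is correct and follows the paper's proof essentially line for line: you use the layer-cake formula split at $q$, bound the part on $[0,q]$ trivially by $q^2$, integrate the tail bound $A^{\theta\gamma M}u^{-p\theta\gamma M}$ over $(q,\infty)$ using $p\theta\gamma M>2$, and finally discard the factor $c^{p\theta\gamma M}=\left((1-\gamma)^{1-\gamma}\gamma^\gamma\right)^{\frac{\theta M}{1-\theta}}\leq 1$. Your additional checks are exactly the facts the paper relies on in the derivation preceding the statement: that $\left((1-\gamma)^{1-\gamma}\gamma^\gamma\right)^{1/((1-\theta)\gamma)}<\gamma$, and that the factor $(1-p)^{1-\gamma}\leq 1$ can simply be dropped.
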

\textit{The proof of Theorem~\ref{theo:variance} is given in Appendix~\ref{app:variance}.}
   
   Note that in~\eqref{eqn:truesecondmoment} we see that the bounds cannot be arbitrarily reduced with respect to the number of chunks $M$ because we have
    \begin{align*}
        \lim\limits_{M\to\infty} \frac{\sigma((3-2\gamma)M+3)}{R^{\frac{p-1}{p}}((1-2\gamma)M) \left((1-\gamma)^{1-\gamma}\gamma^\gamma\right)^{\frac{1}{(1-\theta)p\gamma}} }=\frac{\sigma(3-2\gamma)}{R^{\frac{p-1}{p}}(1-2\gamma) \left((1-\gamma)^{1-\gamma}\gamma^\gamma\right)^{\frac{1}{(1-\theta)p\gamma}} },
    \end{align*}
    and the second factor goes to $1$ as $M\to\infty.$ However, the significance of $M$ is reflected in the fact that we have lowered the tail probabilities from $O(u^{-p})$ to $O(u^{-p\theta\gamma M}).$ On the other hand, the number of data points per each data chunk $R$ affects the reduction of bound in~\eqref{eqn:truesecondmoment}, namely, we have that $\mathbb{E}\left[\|\boldsymbol{\nu}_{j^\star}\| \right] \leq O(R^{-\frac{p-1}{p}})$ and 
         $\mathbb{E}\left[\|\boldsymbol{\nu}_{j^\star}\|^2 \right]\leq O(R^{-\frac{2(p-1)}{p}}).$ We close this subsection with the following lemma, which shows that $\boldsymbol{\nu}_{j^\star}$ is a zero mean random variable.

    \begin{lemma}\label{lemma:zeromean}
        Suppose that $\boldsymbol{\nu}_j,$ for $j=1,2,...,M$ are random variables that satisfy Assumption~\ref{as:gradientnoise} and suppose that $\boldsymbol{\nu}_{j^\star}$ is their medoid. Then we have that $\mathbb{E}[\boldsymbol{\nu}_{j^\star}]=0.$
    \end{lemma}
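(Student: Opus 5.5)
The plan is to use a symmetry argument. We show that the medoid selection map commutes with global sign flip, so $\boldsymbol{\nu}_{j^\star}$ and $-\boldsymbol{\nu}_{j^\star}$ have the same distribution. We then check integrability so that the expectation exists, and conclude that it equals zero.

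\textbf{Step 1: the selected index is invariant under a global sign flip.} Write $\boldsymbol{\nu}_j=\frac{1}{R}\sum_{k=1}^R \boldsymbol{\nu}_{\boldsymbol{\xi}_{j_k}}$. By parts 2) and 3) of Assumption~\ref{as:gradientnoise}, the per-point noises are independent and each is symmetric. Hence the whole vector $(\boldsymbol{\nu}_{\boldsymbol{\xi}_1},\dots,\boldsymbol{\nu}_{\boldsymbol{\xi}_K})$ has the same joint distribution as its negation. It follows that $(\boldsymbol{\nu}_1,\dots,\boldsymbol{\nu}_M)\overset{d}{=}(-\boldsymbol{\nu}_1,\dots,-\boldsymbol{\nu}_M)$. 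Now define $g(\mathbf{a}_1,\dots,\mathbf{a}_M)=\mathbf{a}_{j^\star(\mathbf{a})}$, where $j^\star(\mathbf{a})=\argmin_j\sum_i\|\mathbf{a}_i-\mathbf{a}_j\|$. The quantities $D(\mathbf{a}_j)=\sum_i\|\mathbf{a}_i-\mathbf{a}_j\|$ are unchanged when every $\mathbf{a}_i$ is replaced by $-\mathbf{a}_i$. Therefore $j^\star(-\mathbf{a})=j^\star(\mathbf{a})$ and $g(-\mathbf{a})=-g(\mathbf{a})$. To make $j^\star$ well defined, we use a fixed tie-breaking rule, for instance the smallest index. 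Ties occur only on a null set: each chunk noise is an average of absolutely continuous independent vectors, hence absolutely continuous, so Lemma~\ref{lemma:absolutecontinuous} applies. In any case, the smallest-index rule is itself sign-invariant, since it depends only on the values $D(\mathbf{a}_j)$. We also need $g$ to be measurable; it is piecewise equal to a coordinate projection on the Borel sets $\{j^\star=j\}$.

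\textbf{Step 2: conclude that the mean is zero.} Combining the distributional identity with the oddness of $g$ gives $\boldsymbol{\nu}_{j^\star}=g(\boldsymbol{\nu})\overset{d}{=}g(-\boldsymbol{\nu})=-\boldsymbol{\nu}_{j^\star}$. Provided $\mathbb{E}\|\boldsymbol{\nu}_{j^\star}\|<\infty$, this yields $\mathbb{E}[\boldsymbol{\nu}_{j^\star}]=-\mathbb{E}[\boldsymbol{\nu}_{j^\star}]$, and hence $\mathbb{E}[\boldsymbol{\nu}_{j^\star}]=0$.

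\textbf{Step 3: integrability, the only delicate point.} Two routes work, and neither needs any condition on $M$. The simplest is the elementary bound $\|\boldsymbol{\nu}_{j^\star}\|\le\sum_{j=1}^M\|\boldsymbol{\nu}_j\|$. Each $\boldsymbol{\nu}_j$ has a finite $p$-th moment with $p>1$, so each has a finite first moment, and therefore $\mathbb{E}\|\boldsymbol{\nu}_{j^\star}\|\le M\sigma R^{-(p-1)/p}<\infty$. When $M>\frac{2}{p\theta\gamma}$, Theorem~\ref{theo:variance} gives the stronger conclusion of a finite second moment, but the crude bound above already suffices for the lemma.
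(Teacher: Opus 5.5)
Your proposal is correct and follows the paper's argument: the joint sign-symmetry $(\boldsymbol{\nu}_1,\dots,\boldsymbol{\nu}_M)\overset{d}{=}(-\boldsymbol{\nu}_1,\dots,-\boldsymbol{\nu}_M)$ together with the oddness $\boldsymbol{\nu}_{j^\star}(-\boldsymbol{\nu}_1,\dots,-\boldsymbol{\nu}_M)=-\boldsymbol{\nu}_{j^\star}(\boldsymbol{\nu}_1,\dots,\boldsymbol{\nu}_M)$ gives $\mathbb{E}[\boldsymbol{\nu}_{j^\star}]=-\mathbb{E}[\boldsymbol{\nu}_{j^\star}]$. Your extra details fill in steps the paper leaves implicit: deriving the joint symmetry from independence, measurability, integrability via $\|\boldsymbol{\nu}_{j^\star}\|\le\sum_j\|\boldsymbol{\nu}_j\|$, and the sign-invariant smallest-index tie-break, which makes the argument independent of Lemma~\ref{lemma:absolutecontinuous}.
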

    \textit{The proof of Lemma~\ref{lemma:zeromean} is given in Appendix~\ref{app:zeromean}.}

     \section{Main results}\label{sec:main}
    In Subsection~\ref{subsec:expectation} we present the statement and the proof of convergence to first-order stationarity in expectation for R-SGD-Mini algorithm. In Subsection~\ref{subsec:HPB}, we prove that the R-CSGD-Mini algorithm converges with high probability.
    
    \subsection{Convergence to stationarity in expectation}\label{subsec:expectation}
    We now carry out convergence to stationarity in exectation for Algorithm~\ref{alg1}. We have the following theorem.
   \begin{theorem}\label{theorem:convergencetostationarity}
        Let Assumptions~\ref{as:Lipshitzgradient} and~\ref{as:gradientnoise} hold. For $\alpha \leq \frac{1}{2L}$, $\gamma\in(0,\frac{1}{2}),$ $\theta\in(0,1)$ and let $M>\frac{2}{p\theta\gamma}.$, Algorithm~\ref{alg1} generates the sequence of iterates $\{\mathbf{x}^t\}$ such that
        \begin{align*}
            \frac{1}{T}\sum\limits_{t=0}^T\mathbb{E}[\|\nabla F(\mathbf{x}^t)\|^2]&\leq \frac{2F(\mathbf{x}^0) }{\alpha T}+2L\alpha \left(\frac{\sigma((3-2\gamma)M+3)}{R^{\frac{p-1}{p}}((1-2\gamma)M) \left((1-\gamma)^{1-\gamma}\gamma^\gamma\right)^{\frac{1}{(1-\theta)p\gamma}} }\right)^2\\&\times \left(1+\frac{2}{p\theta\gamma M -2} \right).
        \end{align*}
    \end{theorem}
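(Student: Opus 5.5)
The plan is the standard descent-lemma argument for nonconvex SGD. The heavy-tailed difficulty is entirely absorbed by Lemma~\ref{lemma:zeromean} (the medoid noise has zero conditional mean) and Theorem~\ref{theo:variance} (its conditional second moment is finite). Denote by $B$ the right-hand side of~\eqref{eqn:truesecondmoment}. Write the update as $\mathbf{x}^{t+1}=\mathbf{x}^t-\alpha(\nabla F(\mathbf{x}^t)+\boldsymbol{\nu}_{j^\star}^t(\mathbf{x}^t))$. This uses the fact that the medoid index computed from the gradients $\nabla f_i^t$ coincides with the medoid index of the noises $\boldsymbol{\nu}_i^t$, since the differences $\nabla f_i^t-\nabla f_j^t$ do not contain $\nabla F$.

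First, I apply the quadratic upper bound from the Remark after Assumption~\ref{as:Lipshitzgradient} with $\mathbf{x}=\mathbf{x}^t$ and $\mathbf{y}=\mathbf{x}^{t+1}$:
\[
F(\mathbf{x}^{t+1})\le F(\mathbf{x}^t)-\alpha\|\nabla F(\mathbf{x}^t)\|^2-\alpha\nabla F(\mathbf{x}^t)^\top\boldsymbol{\nu}_{j^\star}^t+\frac{L\alpha^2}{2}\|\nabla F(\mathbf{x}^t)+\boldsymbol{\nu}_{j^\star}^t\|^2 .
\]
I then bound $\|\mathbf{a}+\mathbf{b}\|^2\le 2\|\mathbf{a}\|^2+2\|\mathbf{b}\|^2$ and take conditional expectation given $\mathcal{F}_t$. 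Since $\mathbf{x}^t$ is $\mathcal{F}_t$-measurable and the fresh samples at iteration $t$ are independent of $\mathcal{F}_t$, the conditional law of the chunk noises $\boldsymbol{\nu}_j^t(\mathbf{x}^t)$ is that of the noises at the fixed point $\mathbf{x}=\mathbf{x}^t$. These satisfy Assumption~\ref{as:gradientnoise}, since averages of $R$ i.i.d.\ symmetric absolutely continuous vectors remain symmetric, absolutely continuous and i.i.d.\ across chunks, with $p$-th moment bounded by $\sigma^p/R^{p-1}$, uniformly in $\mathbf{x}$.

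Lemma~\ref{lemma:zeromean} then kills the cross term, and Theorem~\ref{theo:variance} bounds $\mathbb{E}[\|\boldsymbol{\nu}_{j^\star}^t\|^2\mid\mathcal{F}_t]\le B$. Because the bound in Theorem~\ref{theo:variance} depends only on $\sigma,R,M,\gamma,\theta,p$ and not on $\mathbf{x}$, it holds uniformly. This gives
\[
\mathbb{E}[F(\mathbf{x}^{t+1})\mid\mathcal{F}_t]\le F(\mathbf{x}^t)-\alpha(1-L\alpha)\|\nabla F(\mathbf{x}^t)\|^2+L\alpha^2 B .
\]
With $\alpha\le\frac{1}{2L}$ we have $1-L\alpha\ge\frac12$. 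Taking total expectations and summing over $t=0,\dots,T-1$ (or $T$, matching the statement's indexing) telescopes to
\[
\frac{\alpha}{2}\sum_t\mathbb{E}\|\nabla F(\mathbf{x}^t)\|^2\le F(\mathbf{x}^0)-\mathbb{E}F(\mathbf{x}^{T})+TL\alpha^2B .
\]
Dividing by $\alpha T/2$ yields exactly $\frac{2F(\mathbf{x}^0)}{\alpha T}+2L\alpha B$. This step uses $F\ge 0$ (or replaces $F(\mathbf{x}^0)$ by $F(\mathbf{x}^0)-\inf F$), which the statement implicitly assumes; I will make this explicit.

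The main obstacle is justifying the conditioning step rigorously: the medoid results are stated for a fixed $\mathbf{x}$, while $\mathbf{x}^t$ is random. I will handle this with a freezing argument. For each fixed $\mathbf{x}$, define $g(\mathbf{x})=\mathbb{E}[\boldsymbol{\nu}_{j^\star}(\mathbf{x})]$ and $h(\mathbf{x})=\mathbb{E}\|\boldsymbol{\nu}_{j^\star}(\mathbf{x})\|^2$. Independence of the iteration-$t$ samples from $\mathcal{F}_t$ gives $\mathbb{E}[\cdot\mid\mathcal{F}_t]=g(\mathbf{x}^t)$ and $h(\mathbf{x}^t)$ respectively, and then Lemma~\ref{lemma:zeromean} and Theorem~\ref{theo:variance} apply pointwise. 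I also need the medoid to be almost surely unique for Lemma~\ref{lemma:farenough} to apply inside Theorem~\ref{theo:variance}; this follows from Lemma~\ref{lemma:absolutecontinuous}.
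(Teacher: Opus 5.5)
Your proposal is correct and follows the paper's proof essentially step for step. Both use the descent inequality with $\|\mathbf{a}+\mathbf{b}\|^2\le 2\|\mathbf{a}\|^2+2\|\mathbf{b}\|^2$ and $\alpha\le\frac{1}{2L}$, then a conditional expectation in which Lemma~\ref{lemma:zeromean} kills the cross term and Theorem~\ref{theo:variance} bounds the second moment, then telescoping. Your freezing argument, your remark that $F\ge 0$ (or $F(\mathbf{x}^0)-\inf F$) is needed, and the identification of the medoid of gradients with the medoid of noises make rigorous what the paper leaves implicit. One correction: summing over $t=0,\dots,T$ produces $(T+1)L\alpha^2B$, not $TL\alpha^2B$. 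So with $\frac{1}{T}\sum_{t=0}^{T}$ the noise term is $2L\alpha B\cdot\frac{T+1}{T}$, and the bound holds exactly only for $t=0,\dots,T-1$; the paper's proof makes the same slip.
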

    \textit{The proof of Theorem~\ref{theorem:convergencetostationarity} is given in Appendix~\ref{app:convergencetostationarity}.}
    
Theorem~\ref{theorem:convergencetostationarity} establishes that for the sequence $\{\mathbf{x}^t\}$ generated by algorithm~\ref{alg1} we have that the expected time-averaged squared gradient norm converges at rate 1/T to a neighborhood of zero. Moreover, Theorem~\ref{theorem:convergencetostationarity} explicitly characterizes that neighborhood through: 1) algorithmic parameters, such as number of data chunks $M$, the number of data points in each data chunk; 2) statistical properties of the gradient noise.
As a direct consequence of Theorem~\ref{theorem:convergencetostationarity}, when the time horizon is known in advance and $\alpha=T^{-\frac{1}{2}},$ we recover the same rate as SGD in the light-tailed setting, i.e. $\frac{1}{T}\sum\limits_{t=0}^T\mathbb{E}[\|\nabla F(\mathbf{x}^t)\|^2]\leq \mathcal{O}(T^{-\frac{1}{2}}).$ It can be seen that this holds even when gradient noise has heavy tailed distribution. In contrast, it can be shown that 
that standard SGD diverges when gradient noise has infinite variance (see, for example, Appendix B in~\cite{dusan}). Quantity $\frac{1}{T}\sum\limits_{t=0}^T\mathbb{E}[\|\nabla F(\mathbf{x}^t)\|^2]$
is a standard metric for non-convex stochastic optimization, see e.g.,~\cite{ghadimi}. It equals the expected squared gradient norm at the past iterate $\mathbf{x}^s$ selected uniformly at random among the iterates seen so far $\mathbf{x}^0,....,\mathbf{x}^T.$

\subsection{Convergence with High-Probability Bounds}\label{subsec:HPB}

In this subsection, we show that using medoid idea one can also achieve optimal results in terms of high-probability convergence bounds. To obtain these guarantees, we apply gradient clipping and modify Step 6 in Algorithm~\ref{alg1} by replacing the full step $ \nabla f_{j^\star}^t(\mathbf{x}^t)$ with a clipped gradient step, i.e., $\Tilde{\nabla}f_{j^\star}^t(\mathbf{x}^t)=\min\{1,\frac{\lambda_t}{\|\nabla f_{j^\star}^t(\mathbf{x}^t)\|}\}\nabla f_{j^\star}^t(\mathbf{x}^t)$. Recall that we refer to the resulting algorithm as R-CSGD-Mini algorithm.
The authors in~\cite{nguyen2023improved} show that, with probability at least $1-\delta,$ the optimal convergence rate in terms of the $p$-th moment is attained, i.e., $\frac{1}{T}\sum\limits_{t=1}^T\|\nabla F(\mathbf{x^t})\|^2\leq \mathcal{O}(T^{\frac{2-2p}{3p-2}})$ by suitably choosing the clipping threshold and step size.  For completeness, we restate their result in Appendix~\ref{app:theoHPB} (Theorem~\ref{theo:nguyen}), adapted to our notation (see Theorem 5.1 in \cite{nguyen2023improved}).
By further assuming that population loss $F$ is bounded below, and using the fact that the effective noise $\boldsymbol{\nu}_{j^\star}$ has finite variance (i.e., the effective moment satisfies $p\geq2$) whenever $M>\frac{2}{p\theta\gamma}$ for fixed $\gamma$ and $\theta$, selecting the step size and clipping parameters optimally as in Theorem~\ref{theo:nguyen} in  Appendix~\ref{app:theoHPB}, implies that R-CSGD-Mini achieves a convergence rate of $\mathcal{O}(T^{-\frac{1}{2}})$ with high probability.

\section{Numerical experiments}	\label{sec:examples}
 \paragraph{Real-Data Experiment: CIFAR-10 (ResNet-18).} We evaluate the behavior of several optimization methods on the CIFAR-10~\cite{cifar10} benchmark using a modified ResNet-18~\cite{resnet} with a 3×3 (stride 1) initial convolution and no max-pooling layer. We use the standard train/test split and apply standard normalization with mean $(0.4914, 0.4822, 0.4465)$ and standard deviation $(0.2023, 0.1994, 0.2010)$ to all inputs. Training is performed in an iteration-based regime, where a fixed sequence of batches is pre-generated and reused across all optimizers to ensure a controlled and fair comparison. We compared R-SGD-Mini and R-CSGD-Mini with standard stochastic gradient descent (SGD), its clipped variant~\cite{HTASGD,CSGDHT}, as well as robust alternative based on Median-of-Means (MoM) and their clipped variant~\cite{puchkinMedian}.  
 The experiments are implemented in Python $3.10.20$ using PyTorch $2.7.1$ and executed on NVIDIA Tesla T4 GPU and conducted over $5$ Monte Carlo runs, each consisting of $20,000$ iterations, using a learning rate of $0.1$ and a batch size of $256$. Final-iteration results are reported in Table~\ref{table:summary} and accuracy over iterations is shown in Figure~\ref{fig:accuracy} in Appendix~\ref{app:numexp}. Here, $\lambda$ denotes the clipping parameter, and $M$ is the number of data chunks.

\begin{table}[h]
\centering
\tiny
\begin{tabular}{l p{0.08cm} p{0.08cm} cccc}
\toprule
Optimizer & $\lambda$ & $M$ & Accuracy & Train Loss & Test Loss & Time (s) \\
\midrule
R-SGD-Mini      & -- & 4 & $0.8190 \pm 0.0026$ & $2.75 \times 10^{-2} \pm 2.09 \times 10^{-3}$ & $1.09 \pm 3.01 \times 10^{-2}$ & $13994.40 \pm 14.25$ \\
R-CSGD-Mini & 5 & 4 & $0.8164 \pm 0.0036$ & $2.70 \times 10^{-2} \pm 9.27 \times 10^{-4}$ & $1.12 \pm 2.10 \times 10^{-2}$ & $13971.25 \pm 10.98$ \\
R-CSGD-Mini & 5 & 3 & $0.8162 \pm 0.0014$ & $9.16 \times 10^{-3} \pm 7.54 \times 10^{-4}$ & $1.05 \pm 1.85 \times 10^{-2}$ & $11282.03 \pm 15.56$ \\
R-SGD-Mini      & -- & 3 & $0.8131 \pm 0.0059$ & $1.02 \times 10^{-2} \pm 1.44 \times 10^{-3}$ & $1.08 \pm 3.35 \times 10^{-2}$ & $11285.55 \pm 17.87$ \\

R-CSGD-Mini & 2 & 4 & $0.7859 \pm 0.0021$ & $1.84 \times 10^{-2} \pm 1.98 \times 10^{-3}$ & $1.49 \pm 2.88 \times 10^{-2}$ & $13970.95 \pm 4.99$ \\
R-CSGD-Mini & 2 & 3 & $0.7780 \pm 0.0044$ & $3.51 \times 10^{-3} \pm 2.57 \times 10^{-4}$ & $1.50 \pm 2.32 \times 10^{-2}$ & $11290.17 \pm 11.80$ \\

clipped-SGD        & 5 & -- & $0.7709 \pm 0.0045$ & $2.28 \times 10^{-5} \pm 1.57 \times 10^{-6}$ & $1.21 \pm 3.86 \times 10^{-2}$ & $5268.82 \pm 8.10$ \\
clipped-MoM   & 5 & 3 & $0.7700 \pm 0.0022$ & $2.99 \times 10^{-5} \pm 5.22 \times 10^{-6}$ & $1.23 \pm 2.05 \times 10^{-2}$ & $16166.10 \pm 66.85$ \\
MoM         & -- & 3 & $0.7692 \pm 0.0026$ & $2.76 \times 10^{-5} \pm 2.93 \times 10^{-6}$ & $1.24 \pm 2.50 \times 10^{-2}$ & $16176.19 \pm 51.67$ \\
SGD         & -- & -- & $0.7683 \pm 0.0034$ & $2.20 \times 10^{-5} \pm 7.26 \times 10^{-7}$ & $1.22 \pm 2.93 \times 10^{-2}$ & $5263.12 \pm 6.53$ \\

clipped-MoM   & 2 & 3 & $0.7229 \pm 0.0041$ & $1.20 \times 10^{-5} \pm 7.05 \times 10^{-7}$ & $1.62 \pm 2.36 \times 10^{-2}$ & $16170.45 \pm 45.99$ \\
clipped-SGD        & 2 & -- & $0.7421 \pm 0.0027$ & $7.17 \times 10^{-6} \pm 2.67 \times 10^{-7}$ & $1.63 \pm 3.18 \times 10^{-2}$ & $5274.23 \pm 3.66$ \\

clipped-MoM   & 2 & 4 & $0.1891 \pm 0.0189$ & $6.68 \pm 2.42$ & $6.14 \pm 2.07$ & $18693.98 \pm 68.92$ \\
MoM         & -- & 4 & $0.1804 \pm 0.0366$ & $24.1 \pm 11.1$ & $20.3 \pm 11.1$ & $18889.52 \pm 78.36$ \\
clipped-MoM   & 5 & 4 & $0.1700 \pm 0.0253$ & $14.5 \pm 4.28$ & $14.0 \pm 4.63$ & $18786.43 \pm 93.68$ \\
\bottomrule
\end{tabular}
\caption{Performance comparison across optimizers including runtime.} \label{table:summary}
\end{table}

From Table~\ref{table:summary}, it can be seen that R-SGD-Mini and R-CSGD-Mini achieve the highest accuracy, averaged over Monte Carlo runs. The best among them reaches 0.819, while the best among the remaining methods achieves 0.7709.
It is also worth noting that, although the accuracy is the highest, the training loss is not. This is because the algorithm is designed to ignore the most extreme gradients, which are associated with the hardest data points. In contrast, the other methods take these points into account, which degrades accuracy, particularly for samples near the decision boundary.
Additionally, for $M=4,$ algorithms based on MoM fail to perform properly. This can be attributed to the fact that the median is not unique when the number of samples is even. In contrast, algorithms proposed in this paper remain stable and perform well even in this setting. Moreover, it can be observed that the runtime of R-SGD-Mini and R-CSGD-Mini is significantly lower than that of algorithms based on MoM~\cite{puchkinMedian}.
\paragraph{Synthetic Example} In this experiment, we consider the problem of minimizing a non-convex objective function $F(x,y)=\tanh^2 x +y^2$ and study the behavior of different optimization methods under heavy-tailed gradient noise. To simulate such a regime, we perturb the gradient with synthetic Cauchy noise, generated as $z/|u|$ where $z$ is a standard Gaussian vector and $u$ is an independent standard Gaussian scalar, scaled by $\gamma=3$; importantly, this construction induces correlated, multivariate noise rather than independent perturbations across coordinates, introducing a dimension-dependent heavy-tailed structure that better reflects realistic settings. We compare three optimization methods: clipped SGD for $\lambda=2$ and MoM-based and R-SGD-Mini for $M=4$, using the same learning rate $\mathrm{LR}=0.01$ and a batch size of $512$. Figure~\ref{fig:synt} shows that R-SGD-Mini remains stable across iterations, while MoM exhibits a tendency to drift away from the optimum under this noise model. In contrast, clipping stabilizes the optimization process, although it leads to slower convergence due to the induced bias in the gradient updates.

\begin{figure}
		\centering \includegraphics[height=26.5mm]{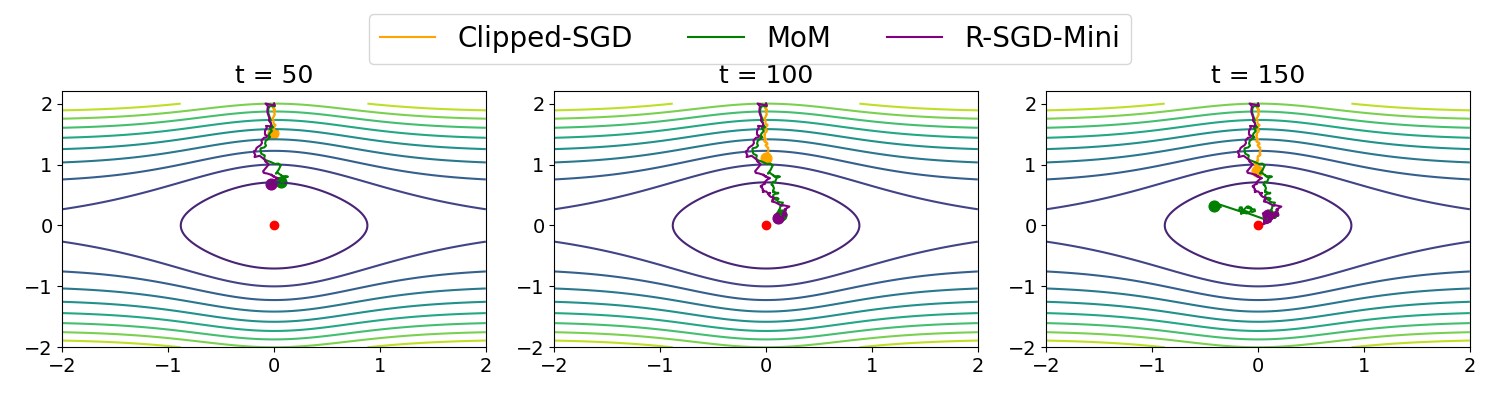}
		\caption{ Evolution of the average iterates across Monte Carlo runs.}\label{fig:synt}
	\end{figure}
	
\section{Conclusion} \label{sec:conlusion}

We addressed the problem of stochastic first order optimization under
heavy-tailed (infinite variance) noises in gradient evaluations. We developed a first order stochastic gradient type method, termed R-SGD-Mini, that is provably robust to symmetric heavy-tailed gradient noises. Specifically, we established bounds on the expected time-averaged squared gradient norm under a general class of symmetric heavy-tailed gradient noises and standard assumptions on the population loss. Additionally, we have shown that R-SGD-Mini converges at a rate $T^{-1}$ to the limiting bound meaning that if the time horizon is known in advance, a rate of $\mathcal{O}(T^{-\frac{1}{2}})$ is established. In addition, the limiting bound is explicitly quantified in terms of function, noise, and model parameters. Numerical experiments on the CIFAR-10 dataset, as well as synthetic data, demonstrate the robustness of R-SGD-Mini to heavy-tailed noise. A limitation of our analysis is the reliance on a symmetry assumption on the gradient noise, which may not hold in practical settings; extending the results beyond this assumption remains an important direction for future work.



\bibliographystyle{plainnat}
\bibliography{references}

@article{CSGDHT,
  title={High probability convergence of clipped-sgd under heavy-tailed noise},
  author={Nguyen, Ta Duy and Nguyen, Thien Hang and Ene, Alina and Nguyen, Huy Le},
  journal={arXiv preprint arXiv:2302.05437},
  year={2023}
}

@article{HTASGD,
  title={Stochastic optimization with heavy-tailed noise via accelerated gradient clipping},
  author={Gorbunov, Eduard and Danilova, Marina and Gasnikov, Alexander},
  journal={Advances in Neural Information Processing Systems},
  volume={33},
  pages={15042--15053},
  year={2020}
}

@article{robinsmonro,
  title={A stochastic approximation method},
  author={Robbins, Herbert and Monro, Sutton},
  journal={The annals of mathematical statistics},
  pages={400--407},
  year={1951},
  publisher={JSTOR}
}

@article{ghadimi,
  title={Stochastic first-and zeroth-order methods for nonconvex stochastic programming},
  author={Ghadimi, Saeed and Lan, Guanghui},
  journal={SIAM journal on optimization},
  volume={23},
  number={4},
  pages={2341--2368},
  year={2013},
  publisher={SIAM}
}

@article{nemirovski,
  title={Robust stochastic approximation approach to stochastic programming},
  author={Nemirovski, Arkadi and Juditsky, Anatoli and Lan, Guanghui and Shapiro, Alexander},
  journal={SIAM Journal on optimization},
  volume={19},
  number={4},
  pages={1574--1609},
  year={2009},
  publisher={SIAM}
}

@article{zhangHT,
  title={Why are adaptive methods good for attention models?},
  author={Zhang, Jingzhao and Karimireddy, Sai Praneeth and Veit, Andreas and Kim, Seungyeon and Reddi, Sashank and Kumar, Sanjiv and Sra, Suvrit},
  journal={Advances in Neural Information Processing Systems},
  volume={33},
  pages={15383--15393},
  year={2020}
}

@inproceedings{simsekliHT,
  title={A tail-index analysis of stochastic gradient noise in deep neural networks},
  author={Simsekli, Umut and Sagun, Levent and Gurbuzbalaban, Mert},
  booktitle={International Conference on Machine Learning},
  pages={5827--5837},
  year={2019},
  organization={PMLR}
}

@article{nairHT,
  title={The Fundamentals of Heavy Tails},
  author={Nair, Jayakrishnan and Wierman, Adam and Zwart, Bert},
  year={2013}
}

@misc{armacki,
      title={Optimal High-probability Convergence of Nonlinear SGD under Heavy-tailed Noise via Symmetrization}, 
      author={Aleksandar Armacki and Dragana Bajovic and Dusan Jakovetic and Soummya Kar},
      year={2025},
      eprint={2507.09093},
      archivePrefix={arXiv},
      primaryClass={stat.ML},
      url={https://arxiv.org/abs/2507.09093}, 
}

@article{dusan,
author = {Jakovetic, Dusan and Bajovic, Dragana and Sahu, Anit Kumar and Kar, Soummya and Milosevic, Nemanja and Stamenkovic, Dusan},
title = {Nonlinear Gradient Mappings and Stochastic Optimization: A General Framework with Applications to Heavy-Tail Noise},
journal = {SIAM Journal on Optimization},
volume = {33},
number = {2},
pages = {394-423},
year = {2023},
doi = {10.1137/21M145896X},

URL = { 
    
        https://doi.org/10.1137/21M145896X
    
    

},
eprint = { 
    
        https://doi.org/10.1137/21M145896X
    
    

}
}

@inproceedings{puchkinMedian,
  title={Breaking the heavy-tailed noise barrier in stochastic optimization problems},
  author={Puchkin, Nikita and Gorbunov, Eduard and Kutuzov, Nickolay and Gasnikov, Alexander},
  booktitle={International Conference on Artificial Intelligence and Statistics},
  pages={856--864},
  year={2024},
  organization={PMLR}
}

@article{cutkosky2021high,
  title={High-probability bounds for non-convex stochastic optimization with heavy tails},
  author={Cutkosky, Ashok and Mehta, Harsh},
  journal={Advances in Neural Information Processing Systems},
  volume={34},
  pages={4883--4895},
  year={2021}
}

@inproceedings{liu2023breaking,
  title={Breaking the lower bound with (little) structure: Acceleration in non-convex stochastic optimization with heavy-tailed noise},
  author={Liu, Zijian and Zhang, Jiawei and Zhou, Zhengyuan},
  booktitle={The Thirty Sixth Annual Conference on Learning Theory},
  pages={2266--2290},
  year={2023},
  organization={PMLR}
}

@article{nguyen2023improved,
  title={Improved convergence in high probability of clipped gradient methods with heavy tailed noise},
  author={Nguyen, Ta Duy and Nguyen, Thien H and Ene, Alina and Nguyen, Huy},
  journal={Advances in Neural Information Processing Systems},
  volume={36},
  pages={24191--24222},
  year={2023}
}

@article{minsker2015geometric,
  title={Geometric median and robust estimation in Banach spaces},
  author={Minsker, Stanislav},
  year={2015}
}

@article{lugosi2019mean,
  title={Mean estimation and regression under heavy-tailed distributions: A survey},
  author={Lugosi, G{\'a}bor and Mendelson, Shahar},
  journal={Foundations of Computational Mathematics},
  volume={19},
  number={5},
  pages={1145--1190},
  year={2019},
  publisher={Springer}
}

@article{lugosi2019sub,
  title={Sub-Gaussian estimators of the mean of a random vector},
  author={Lugosi, G{\'a}bor and Mendelson, Shahar},
  year={2019}
}

@inproceedings{schubert2019faster,
  title={Faster k-medoids clustering: improving the PAM, CLARA, and CLARANS algorithms},
  author={Schubert, Erich and Rousseeuw, Peter J},
  booktitle={International conference on similarity search and applications},
  pages={171--187},
  year={2019},
  organization={Springer}
}

@article{schubert2021fast,
  title={Fast and eager k-medoids clustering: O (k) runtime improvement of the PAM, CLARA, and CLARANS algorithms},
  author={Schubert, Erich and Rousseeuw, Peter J},
  journal={Information Systems},
  volume={101},
  pages={101804},
  year={2021},
  publisher={Elsevier}
}

@article{chen2020understanding,
  title={Understanding gradient clipping in private sgd: A geometric perspective},
  author={Chen, Xiangyi and Wu, Steven Z and Hong, Mingyi},
  journal={Advances in neural information processing systems},
  volume={33},
  pages={13773--13782},
  year={2020}
}

@inproceedings{bernstein2018signsgd,
  title={signSGD: Compressed optimisation for non-convex problems},
  author={Bernstein, Jeremy and Wang, Yu-Xiang and Azizzadenesheli, Kamyar and Anandkumar, Animashree},
  booktitle={International conference on machine learning},
  pages={560--569},
  year={2018},
  organization={PMLR}
}

@article{barsbey2021heavy,
  title={Heavy tails in SGD and compressibility of overparametrized neural networks},
  author={Barsbey, Melih and Sefidgaran, Milad and Erdogdu, Murat A and Richard, Gael and Simsekli, Umut},
  journal={Advances in neural information processing systems},
  volume={34},
  pages={29364--29378},
  year={2021}
}

@inproceedings{battash2024revisiting,
  title={Revisiting the noise model of stochastic gradient descent},
  author={Battash, Barak and Wolf, Lior and Lindenbaum, Ofir},
  booktitle={International Conference on Artificial Intelligence and Statistics},
  pages={4780--4788},
  year={2024},
  organization={PMLR}
}

@inproceedings{armacki2025high,
  title={High-probability convergence bounds for online nonlinear stochastic gradient descent under heavy-tailed noise},
  author={Armacki, Aleksandar and Yu, Shuhua and Sharma, Pranay and Joshi, Gauri and Bajovic, Dragana and Jakovetic, Dusan and Kar, Soummya},
  booktitle={The 28th International Conference on Artificial Intelligence and Statistics},
  volume={258},
  pages={9},
  year={2025}
}

@article{bernstein2018signsgd2,
  title={signSGD with majority vote is communication efficient and fault tolerant},
  author={Bernstein, Jeremy and Zhao, Jiawei and Azizzadenesheli, Kamyar and Anandkumar, Anima},
  journal={arXiv preprint arXiv:1810.05291},
  year={2018}
}

@inproceedings{gurbuzbalaban2021heavy,
  title={The heavy-tail phenomenon in SGD},
  author={Gurbuzbalaban, Mert and Simsekli, Umut and Zhu, Lingjiong},
  booktitle={International Conference on Machine Learning},
  pages={3964--3975},
  year={2021},
  organization={PMLR}
}

@inproceedings{simsekli2019tail,
  title={A tail-index analysis of stochastic gradient noise in deep neural networks},
  author={Simsekli, Umut and Sagun, Levent and Gurbuzbalaban, Mert},
  booktitle={International Conference on Machine Learning},
  pages={5827--5837},
  year={2019},
  organization={PMLR}
}

@inproceedings{peluchetti2020stable,
  title={Stable behaviour of infinitely wide deep neural networks},
  author={Peluchetti, Stefano and Favaro, Stefano and Fortini, Sandra},
  booktitle={International Conference on Artificial Intelligence and Statistics},
  pages={1137--1146},
  year={2020},
  organization={PMLR}
}

@inproceedings{resnet,
  title={Deep residual learning for image recognition},
  author={He, Kaiming and Zhang, Xiangyu and Ren, Shaoqing and Sun, Jian},
  booktitle={Proceedings of the IEEE conference on computer vision and pattern recognition},
  pages={770--778},
  year={2016}
}

@article{cifar10,
  title={Learning multiple layers of features from tiny images},
  author={Krizhevsky, Alex and Hinton, Geoffrey and others},
  year={2009},
  publisher={Toronto, ON, Canada}
}


\appendix

\section{Proof of Lemma~\ref{lemma:farenough}}\label{app:farenoguh}
\begin{proof}
    Suppose that the implication does not hold. Without loss of generality, this implies that $\|\boldsymbol{\nu}\|\leq r,$ for $i=1,2,...,\lfloor(1-\gamma)M\rfloor+1.$ If we consider some $\boldsymbol{\nu}_k$ such that $k\in \{1,2,...,\lfloor(1-\gamma)M\rfloor+1\},$ and show that $D(\boldsymbol{\nu}_{j^\star})-D(\boldsymbol{\nu}_k)>0,$ we reach a contradiction, which establishes the lemma. We have
    \begin{align*}
        D(\boldsymbol{\nu}_{j^\star})-D(\boldsymbol{\nu}_k) &= \sum_{i=1}^M \left(\|\boldsymbol{\nu}_i-\boldsymbol{\nu}_{j^\star}\|- \|\boldsymbol{\nu}_i-\boldsymbol{\nu}_k\| \right)\\
        & = \sum_{i=1}^{\lfloor(1-\gamma)M\rfloor+1} \left(\|\boldsymbol{\nu}_i-\boldsymbol{\nu}_{j^\star}\|- \|\boldsymbol{\nu}_i-\boldsymbol{\nu}_k\| \right)\\&+
        \sum_{i=\lfloor(1-\gamma)M\rfloor+2}^{M} \left(\|\boldsymbol{\nu}_i-\boldsymbol{\nu}_{j^\star}\|- \|\boldsymbol{\nu}_i-\boldsymbol{\nu}_k\| \right)
    \end{align*}
    For $i=1,2,...,\lfloor(1-\gamma)M\rfloor+1,$ we have that
    \begin{align*}
        \|\boldsymbol{\nu}_i-\boldsymbol{\nu}_{j^\star}\|&=\|\boldsymbol{\nu}_i-\mathbf{z}-(\boldsymbol{\nu}_{j^\star}-\mathbf{z})\| \geq \|\boldsymbol{\nu}_{j^\star}-\mathbf{z}\| - \|\boldsymbol{\nu}_i-\mathbf{z}\| \geq \|\boldsymbol{\nu}_{j^\star}-\mathbf{z}\| - r\\
       \|\boldsymbol{\nu}_i-\boldsymbol{\nu}_k\|&=\|\boldsymbol{\nu}_i-\mathbf{z}-(\boldsymbol{\nu}_k-\mathbf{z})\|\leq \|\boldsymbol{\nu}_i-\mathbf{z}\|+\|\boldsymbol{\nu}_k-\mathbf{z}\| \leq2r,
    \end{align*}
    hence, $\|\boldsymbol{\nu}_i-\boldsymbol{\nu}_{j^\star}\|- \|\boldsymbol{\nu}_i-\boldsymbol{\nu}_k\|\geq \|\boldsymbol{\nu}_{j^\star}-\mathbf{z}\| -3r,$ and
    \begin{align*}
        \sum_{i=1}^{\lfloor(1-\gamma)M\rfloor+1} \left(\|\boldsymbol{\nu}_i-\boldsymbol{\nu}_{j^\star}\|- \|\boldsymbol{\nu}_i-\boldsymbol{\nu}_k\| \right) \geq \left(\lfloor(1-\gamma)M\rfloor+1\right) \left( \|\boldsymbol{\nu}_{j^\star}-\mathbf{z}\| -3r\right).
    \end{align*}
    On the other hand, for $i=\lfloor(1-\gamma)M\rfloor+2, \lfloor(1-\gamma)M\rfloor+3,...,M$ we have that
    \begin{align*}
        \|\boldsymbol{\nu}_i-\boldsymbol{\nu}_{j^\star}\|- \|\boldsymbol{\nu}_i-\boldsymbol{\nu}_k\| &\geq - \| \boldsymbol{\nu}_{j^\star}- \boldsymbol{\nu}_k\|= - \| \boldsymbol{\nu}_{j^\star}-\mathbf{z} - (\boldsymbol{\nu}_k-\mathbf{z})\|\\ 
        &\geq - (\| \boldsymbol{\nu}_{j^\star}-\mathbf{z}\|+ \| \boldsymbol{\nu}_k-\mathbf{z}\|) \geq -\left(\|\boldsymbol{\nu}_{j^\star}-\mathbf{z}\| + r\right),
    \end{align*}
    and
    \begin{align*}
        \sum_{i=\lfloor(1-\gamma)M\rfloor+2}^{M} \left(\|\boldsymbol{\nu}_i-\boldsymbol{\nu}_{j^\star}\|- \|\boldsymbol{\nu}_i-\boldsymbol{\nu}_k\| \right) \geq - (\lceil \gamma M\rceil -1) \left(\|\boldsymbol{\nu}_{j^\star}-\mathbf{z}\| + r \right).
    \end{align*}
    We have that
    \begin{align*}
        D(\boldsymbol{\nu}_{j^\star})-D(\boldsymbol{\nu}_k) &\geq \left(\lfloor(1-\gamma)M\rfloor+1\right) \left( \|\boldsymbol{\nu}_{j^\star}-\mathbf{z}\| -3r\right) - (\lceil \gamma M\rceil -1) \left(\|\boldsymbol{\nu}_{j^\star}-\mathbf{z}\| + r \right)\\
        &= \left(\lfloor(1-\gamma)M\rfloor - \lceil \gamma M\rceil +2 \right) \|\boldsymbol{\nu}_{j^\star}-\mathbf{z}\| - \left( 3\lfloor(1-\gamma)M\rfloor + \lceil \gamma M\rceil +2\right)r
    \end{align*}
    Using $x-1<\lfloor x \rfloor \leq x$ and $x\leq \lceil x\rceil <x+1,$ we get
    \begin{align*}
         D(\boldsymbol{\nu}_{j^\star})-D(\boldsymbol{\nu}_k)&\geq (1-2\gamma)M \|\boldsymbol{\nu}_{j^\star}-\mathbf{z}\| - ((3-2\gamma)M+3) r>0
    \end{align*}
    if $ \|\boldsymbol{\nu}_{j^\star}-\mathbf{z}\| > \frac{(3-2\gamma)M+3}{(1-2\gamma)M}r,$ which leads to a contradiction.
\end{proof}

\section{Proof of Lemma~\ref{lemma:absolutecontinuous}}\label{app:absolutecontinuous}

\begin{proof}
Fix $\boldsymbol{\nu}_i,i=2,3,...,M$ and let $\boldsymbol{\nu}_1=\boldsymbol{\nu}$ for some $\boldsymbol{\nu}\in\mathbb{R}^d.$ Denote by $G(\boldsymbol{\nu})=D(\boldsymbol{\nu}) - D(\boldsymbol{\nu}_2).$ We have that
\begin{align*}
    G(\boldsymbol{\nu})&=\sum_{i=1}^M\|\boldsymbol{\nu}_i-\boldsymbol{\nu}\| - \sum_{i=1}^M\|\boldsymbol{\nu}_i-\boldsymbol{\nu}_2\|\\
    &=\sum_{i=3}^M\|\boldsymbol{\nu}_i-\boldsymbol{\nu}\|+\|\boldsymbol{\nu}_2-\boldsymbol{\nu}\|  - \sum_{i=3}^M\|\boldsymbol{\nu}_i-\boldsymbol{\nu}_2\| - \|\boldsymbol{\nu}-\boldsymbol{\nu}_2\|\\
    &=\sum_{i=3}^M\|\boldsymbol{\nu}_i-\boldsymbol{\nu}\| - \sum_{i=3}^M\|\boldsymbol{\nu}_i-\boldsymbol{\nu}_2\|,
\end{align*}
and second sum is constant with respect to $\boldsymbol{\nu}.$ Since $P\{\boldsymbol{\nu}= \boldsymbol{\nu}_i\}=0$ for $i=3,4,...,M$ it is enough to consider the gradient $\nabla G(\boldsymbol{\nu})$
 for $\boldsymbol{\nu}\neq \boldsymbol{\nu}_i, i=3,4,...,M,$ that is
\begin{align*}
    \nabla G(\boldsymbol{\nu})=\sum_{i=3}^M\frac{\boldsymbol{\nu}_i-\boldsymbol{\nu}}{\|\boldsymbol{\nu}_i-\boldsymbol{\nu}\|}.
\end{align*}
Hence, $\nabla G(\boldsymbol{\nu})=0$ only if $\boldsymbol{\nu}$ is a true geometric median of $\boldsymbol{\nu}_i,i=3,4,...,M.$ Therefore, set $\mathcal{A}=\{\boldsymbol{\nu} | \sum_{i=3}^M\frac{\boldsymbol{\nu}_i-\boldsymbol{\nu}}{\|\boldsymbol{\nu}_i-\boldsymbol{\nu}\|}=0 \}$ has Lebesgue measure zero in $\mathbb{R}^d.$ This concludes the proof since then we have that $P\{\boldsymbol{\nu}\in \mathcal{A}\}=0.$
\end{proof}

\section{Proof of Theorem~\ref{theo:probabilitybound}}\label{app:probabilitybound}
\begin{proof}
    Let us consider the event $\mathcal{E}=\{D(\boldsymbol{\nu}_i) \neq D(\boldsymbol{\nu}_j) \text{ for all }i\neq j\}.$
    We have that
    \begin{align*}
        P\{ \| \boldsymbol{\nu}_{j^\star}-\boldsymbol{\mu}\| > C_{\gamma,M}\varepsilon\} = P\{ \{\| \boldsymbol{\nu}_{j^\star}-\boldsymbol{\mu}\| > C_{\gamma,M}\varepsilon\} \cap \mathcal{E}  \}+P\{ \{ \| \boldsymbol{\nu}_{j^\star}-\boldsymbol{\mu}\| > C_{\gamma,M}\varepsilon\}\cap \mathcal{E}^c  \}
    \end{align*}
    Imposing Lemmas~\ref{lemma:farenough} and~\ref{lemma:absolutecontinuous} we have that
    \begin{align*}
        P\{ \| \boldsymbol{\nu}_{j^\star}-\boldsymbol{\mu}\| > C_{\gamma,M}\varepsilon\} &\leq P\{\sum_{i=1}^M I(\|\boldsymbol{\nu}_i\|>\varepsilon )>\gamma M \}\\
        &\leq P\{ \xi >\gamma M\},
    \end{align*}
    where $\xi$ has Binomial distribution $\xi\sim B(M,p).$ Utilizing Chernoff bound we have that
    \begin{align*}
        P\{ \| \boldsymbol{\nu}_{j^\star}-\boldsymbol{\mu}\| > C_{\gamma,M}\varepsilon\} \leq \exp{\left(-M\psi(\gamma,p) \right)}.
    \end{align*}
\end{proof}

\section{Proof of Theorem~\ref{theo:variance}}\label{app:variance}
\begin{proof}
    For $p\theta\gamma M >2,$ and recall $q$ from~\eqref{eqn:q} we get
    \begin{align*}
        \mathbb{E}\left[\|\boldsymbol{\nu}_{j^\star}\|^2 \right]&=2\int_0^\infty uP\{ \|\boldsymbol{\nu}_{j^\star}\| >u\} du= 2\int_0^q u P\{ \|\boldsymbol{\nu}_{j^\star}\| >u\} du + 2\int_q^\infty u P\{ \|\boldsymbol{\nu}_{j^\star}\| >u\} du\\
        & \leq \left(\frac{\sigma((3-2\gamma)M+3)}{R^{\frac{p-1}{p}}((1-2\gamma)M) \left((1-\gamma)^{1-\gamma}\gamma^\gamma\right)^{\frac{1}{(1-\theta)p\gamma}} }\right)^2 +   2 \left(\frac{\sigma^p((3-2\gamma)M+3)^p}{R^{p-1} ((1-2\gamma)M)^p }\right)^{\theta\gamma M}
        \\& \cdot \frac{1}{p\theta\gamma M - 2} \left(\frac{\sigma((3-2\gamma)M+3)}{R^{\frac{p-1}{p}}((1-2\gamma)M) \left((1-\gamma)^{1-\gamma}\gamma^\gamma\right)^{\frac{1}{(1-\theta)p\gamma}} }\right)^{2-p\theta\gamma M}\\
        &= \left(\frac{\sigma((3-2\gamma)M+3)}{R^{\frac{p-1}{p}}((1-2\gamma)M) \left((1-\gamma)^{1-\gamma}\gamma^\gamma\right)^{\frac{1}{(1-\theta)p\gamma}} }\right)^2 \left(1+\frac{2  \left((1-\gamma)^{1-\gamma}\gamma^\gamma\right)^{\frac{\theta M}{(1-\theta)}}  }{p\theta\gamma M -2} \right)\\
        &\leq \left(\frac{\sigma((3-2\gamma)M+3)}{R^{\frac{p-1}{p}}((1-2\gamma)M) \left((1-\gamma)^{1-\gamma}\gamma^\gamma\right)^{\frac{1}{(1-\theta)p\gamma}} }\right)^2 \left(1+\frac{2}{p\theta\gamma M -2} \right),
    \end{align*}
    which completes the proof.
    \end{proof}

    \section{Proof of Lemma~\ref{lemma:zeromean}}\label{app:zeromean}
     \begin{proof}
        By Assumption~\ref{as:gradientnoise}, we have $(\boldsymbol{\nu}_1,\boldsymbol{\nu}_2,...,\boldsymbol{\nu}_M)$ and $(-\boldsymbol{\nu}_1,-\boldsymbol{\nu}_2,...,-\boldsymbol{\nu}_M)$ have the same distribution. Therefore, we have
            $\mathbb{E}[\boldsymbol{\nu}_{j^\star}(\boldsymbol{\nu}_1,\boldsymbol{\nu}_2,...,\boldsymbol{\nu}_M)]=\mathbb{E}[\boldsymbol{\nu}_{j^\star}(-\boldsymbol{\nu}_1,-\boldsymbol{\nu}_2,...,-\boldsymbol{\nu}_M)].$
        Moreover, we have  
        \begin{align*}
            \boldsymbol{\nu}_{j^\star}(-\boldsymbol{\nu}_1,-\boldsymbol{\nu}_2,...,-\boldsymbol{\nu}_M)=-\boldsymbol{\nu}_{j^\star}(\boldsymbol{\nu}_1,\boldsymbol{\nu}_2,...,\boldsymbol{\nu}_M).
        \end{align*} Hence,
        $\mathbb{E}[\boldsymbol{\nu}_{j^\star}(\boldsymbol{\nu}_1,\boldsymbol{\nu}_2,...,\boldsymbol{\nu}_M)]=-\mathbb{E}[\boldsymbol{\nu}_{j^\star}(\boldsymbol{\nu}_1,\boldsymbol{\nu}_2,...,\boldsymbol{\nu}_M)],$ which implies that $\mathbb{E}[\boldsymbol{\nu}_{j^\star}]=0.$
    \end{proof}
\section{Proof of Theorem~\ref{theorem:convergencetostationarity}}\label{app:convergencetostationarity}
    \begin{proof}
        We have that
        \begin{align*}
            F(\mathbf{x}^t-\alpha\nabla &f_j(\mathbf{x}^t))\leq F(\mathbf{x}^t) -\alpha \nabla F(\mathbf{x}^t)^\top \nabla f_j(\mathbf{x}^t)+\alpha^2 \frac{L}{2}\|\nabla f_j(\mathbf{x}^t)\|^2\\
            &\leq F(\mathbf{x}^t) - \alpha \nabla F(\mathbf{x}^t)^\top (\nabla F(\mathbf{x}^t)+\boldsymbol{\nu}_j^t(\mathbf{x}^t))+L\alpha^2\left(\|\nabla F(\mathbf{x}^t)||^2+\|\boldsymbol{\nu}_j^t(\mathbf{x}^t)\|^2\right)\\
            &= F(\mathbf{x}^t)- \frac{\alpha}{2}\|\nabla F(\mathbf{x}^t)\|^2 -\alpha \nabla F(\mathbf{x}^t)^\top \boldsymbol{\nu}_j^t (\mathbf{x}^t)+L\alpha^2 \|\boldsymbol{\nu}_j^t(\mathbf{x}^t)\|^2,
        \end{align*}
        where we have used equations~\eqref{eqn:gradnoise}, $(a+b)^2\leq 2a^2+2b^2$ and $\alpha\leq\frac{1}{2L}.$ 
        Next, by algorithm construction we have
        \begin{align}\label{eqn:modification}
            F(\mathbf{x}^t-\alpha\nabla f_{j^\star}^t(\mathbf{x}^t))&= F(\mathbf{x}^t)- \frac{\alpha}{2}\|\nabla F(\mathbf{x}^t)\|^2 -\alpha \nabla F(\mathbf{x}^t)^\top \boldsymbol{\nu}_{j^\star}^t (\mathbf{x}^t)+L\alpha^2 \|\boldsymbol{\nu}_{j^\star}^t(\mathbf{x}^t)\|^2.
        \end{align}
        Setting that $\mathbb{E}_t[\cdot]=\mathbb{E}[\cdot|\mathcal{F}_t]$  and utilizing Theorem~\ref{theo:variance} and Lemma~\ref{lemma:zeromean} we have that
         \begin{align}
            \mathbb{E}_t\left[  F(\mathbf{x}^t-\alpha\nabla f_{j^\star}^t(\mathbf{x}^t))\right]& \leq F(\mathbf{x}^t) - \frac{\alpha}{2}\|\nabla F(\mathbf{x}^t)\|^2 -\alpha \nabla F(\mathbf{x}^t)^\top \mathbb{E}_t\left[\boldsymbol{\nu}_{j^\star}^t\right] + L\alpha^2\mathbb{E}_t\left[ \|\boldsymbol{\nu}_{j^\star}^t\|^2\right]\nonumber\\
            &\leq F(\mathbf{x}^t) - \frac{\alpha}{2}\|\nabla F(\mathbf{x}^t)\|^2+ L\alpha^2 B,\label{eqn:prefinal}
        \end{align}
        where $B=\left(\frac{\sigma((3-2\gamma)M+3)}{R^{\frac{p-1}{p}}((1-2\gamma)M) \left((1-\gamma)^{1-\gamma}\gamma^\gamma\right)^{\frac{1}{(1-\theta)p\gamma}} }\right)^2 \left(1+\frac{2}{p\theta\gamma M -2} \right)$ is given by~\eqref{eqn:truesecondmoment}.
        Finally, by taking the expectation in~\eqref{eqn:prefinal}, we have the following
        \begin{align*}
            \mathbb{E}[F(\mathbf{x}^{t+1})]&\leq \mathbb{E}[F(\mathbf{x}^t)]-\frac{\alpha}{2} \mathbb{E}[\|\nabla F(\mathbf{x}^t)\|^2] +  L\alpha^2 B,
        \end{align*}
        hence, 
        \begin{align*}
            \frac{\alpha}{2}\sum\limits_{t=0}^T\mathbb{E}[\|\nabla F(\mathbf{x}^t)\|^2]&\leq F(\mathbf{x}^0) + TL\alpha^2 B
        \end{align*}
        and
        \begin{align*}
            \frac{1}{T}\sum\limits_{t=0}^T\mathbb{E}[\|\nabla F(\mathbf{x}^t)\|^2]&\leq \frac{2F(\mathbf{x}^0) }{\alpha T}+2L\alpha B,
        \end{align*}
        which completes the proof.
    \end{proof}

\section{Theorem 5.1 in \cite{nguyen2023improved}} \label{app:theoHPB}
\begin{theorem}\label{theo:nguyen}
    Suppose that Assumption~\ref{as:Lipshitzgradient} holds, together with part 3 of Assumption~\ref{as:gradientnoise}. Moreover, assume that the stochastic gradient is unbiased, i.e., $\mathbb{E}[\nabla f^t(\mathbf{x})]=F(\mathbf{x})$ and that the objective function is bounded below, i.e., $F^\star=\inf\limits_{\mathbf{x}\in\mathbb{R}^d} F(\mathbf{x})>-\infty.$ Let $\tau=\max\{\log\frac{1}{\delta},1\}$ and $\Delta_1=F(\mathbf{x}_1)-F^\star.$ For known $T,$ choose $\lambda_t$ and $\eta_t$ such that
    \begin{align*}
        \lambda_t&=\lambda=\max \Bigg\{\left(\frac{8\tau}{\sqrt{L\Delta_1}} \right)^{\frac{1}{p-1}}T^{\frac{1}{3p-2}}\sigma^{\frac{p}{p-1}}, 2\sqrt{90L\Delta_1},32^{\frac{1}{p}}\sigma T^{\frac{1}{3p-2}} \Bigg\},\\
        \alpha_t&=\alpha=\frac{\sqrt{\Delta_1}T^{\frac{1-p}{3p-2}}}{8\lambda\sqrt{L}\tau}=\frac{\sqrt{\Delta_1}}{8\sqrt{L}\tau}\min \Bigg\{\left(\frac{8\tau}{\sqrt{L\Delta_1}}\right)^{\frac{-1}{p-1}}T^{\frac{-p}{3p-2}}\sigma^{\frac{-p}{p-1}},\frac{T^{\frac{1-p}{3p-2}}}{2\sqrt{90L\Delta_1}},\frac{T^{\frac{-p}{3p-2}}}{32^{\frac{1}{p}}\sigma}\Bigg\}.
    \end{align*}
    Then with probability at least $1-\delta$
    \begin{align*}
        \frac{1}{T}\sum\limits_{t=1}^T\|\nabla F(\mathbf{x^t}\|^2&\leq 720\sqrt{\Delta_1 L}\tau \max \Bigg\{ \left(\frac{8\tau}{\sqrt{L\Delta_1}} \right)^{\frac{1}{p-1}}T^{\frac{2-2p}{3p-2}}\sigma^{\frac{p}{p-1}},2\sqrt{90L\Delta_1}T^{\frac{1-2p}{3p-2}},32^{\frac{1}{p}} \sigma T^{\frac{2-2p}{3p-2}} \Bigg\}\\&=\mathcal{O}(T^{\frac{2-2p}{3p-2}}).
    \end{align*}
\end{theorem}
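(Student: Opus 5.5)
This result is quoted from \cite{nguyen2023improved}, so I would reconstruct the standard high-probability analysis of clipped SGD rather than build on the medoid machinery. Throughout, write $\mathbf{g}^t$ for the clipped stochastic gradient and decompose $\mathbf{g}^t-\nabla F(\mathbf{x}^t)=\boldsymbol{\theta}^u_t+\boldsymbol{\theta}^b_t$, where
\begin{align*}
\boldsymbol{\theta}^u_t=\mathbf{g}^t-\mathbb{E}_t[\mathbf{g}^t], \qquad \boldsymbol{\theta}^b_t=\mathbb{E}_t[\mathbf{g}^t]-\nabla F(\mathbf{x}^t).
\end{align*}
The first step is the usual clipping lemma. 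On the event $\|\nabla F(\mathbf{x}^t)\|\le \lambda/2$ it gives
\begin{align*}
\|\boldsymbol{\theta}^u_t\|\le 2\lambda,\qquad \mathbb{E}_t\|\boldsymbol{\theta}^u_t\|^2\le 40\lambda^{2-p}\sigma^p,\qquad \|\boldsymbol{\theta}^b_t\|\le 4\sigma^p\lambda^{1-p}.
\end{align*}
These follow from Markov's inequality on the event that clipping is active, together with the bounded $p$-th moment. I would assume this moment bound in the form $\mathbb{E}\|\nabla f^t(\mathbf{x})-\nabla F(\mathbf{x})\|^p\le\sigma^p$, as the theorem implicitly does.

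Next, apply the descent inequality of the Remark with $\alpha\le 1/(2L)$, which the choice of $\alpha$ guarantees once $\lambda\ge 2\sqrt{90L\Delta_1}$. This gives
\begin{align*}
\Delta_{t+1}\le \Delta_t-\tfrac{\alpha}{2}\|\nabla F(\mathbf{x}^t)\|^2-\alpha\nabla F(\mathbf{x}^t)^\top\boldsymbol{\theta}^u_t+\alpha\|\boldsymbol{\theta}^b_t\|^2+L\alpha^2\|\boldsymbol{\theta}^u_t\|^2 ,
\end{align*}
up to constants, with $\Delta_t=F(\mathbf{x}^t)-F^\star$. The bias cross term $-\alpha\nabla F(\mathbf{x}^t)^\top\boldsymbol{\theta}^b_t$ is absorbed into the negative gradient term by Young's inequality, which leaves the $\alpha\|\boldsymbol{\theta}^b_t\|^2$ contribution.

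The key structural step is an induction that keeps the iterates in the region where the clipping lemma applies. Suppose $\Delta_s\le 2\Delta_1$ for all $s\le t$. Smoothness gives $\|\nabla F(\mathbf{x}^s)\|^2\le 2L\Delta_s\le 4L\Delta_1$, so $\|\nabla F(\mathbf{x}^s)\|\le\lambda/2$ because $\lambda\ge 2\sqrt{90L\Delta_1}$. The clipping bounds therefore hold up to time $t$.

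On this event, sum the descent inequality. The deterministic sums are controlled using the bias bound $\alpha T\sigma^{2p}\lambda^{2-2p}$ and the mean part of the variance term, $L\alpha^2T\lambda^{2-p}\sigma^p$. The three branches of the $\max$ defining $\lambda$, paired with $\alpha\propto T^{\frac{1-p}{3p-2}}/\lambda$, are exactly what make each of these at most a small constant times $\Delta_1$. The random parts are the martingale sums
\begin{align*}
\sum_t \alpha\nabla F(\mathbf{x}^t)^\top\boldsymbol{\theta}^u_t \quad\text{and}\quad \sum_t L\alpha^2\big(\|\boldsymbol{\theta}^u_t\|^2-\mathbb{E}_t\|\boldsymbol{\theta}^u_t\|^2\big).
\end{align*}

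The main obstacle is concentrating these two sums simultaneously without a union bound over $T$ that would cost logarithmic factors. Following Nguyen et al., I would first try a single exponential supermartingale. Set
\begin{align*}
Z_t=\alpha\nabla F(\mathbf{x}^t)^\top\boldsymbol{\theta}^u_t+L\alpha^2\big(\|\boldsymbol{\theta}^u_t\|^2-\mathbb{E}_t\|\boldsymbol{\theta}^u_t\|^2\big)-c\,\alpha\|\nabla F(\mathbf{x}^t)\|^2,
\end{align*}
choose a weight $w\propto 1/(\tau\Delta_1)$, and bound $\mathbb{E}_t[\exp(wZ_t)]\le1$. The bound uses $e^x\le 1+x+x^2$ for $|x|\le1$, which is available because the increments are bounded by $O(\alpha\lambda\cdot\lambda+L\alpha^2\lambda^2)$. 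The choice $\alpha\lambda\le \sqrt{\Delta_1}/(8\sqrt{L}\tau)$ is precisely what keeps $wZ_t$ within this range. The quadratic term $w^2\alpha^2\|\nabla F\|^2\mathbb{E}_t\|\boldsymbol{\theta}^u\|^2$ is then absorbed by the subtracted $c\,\alpha\|\nabla F\|^2$.

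Markov's inequality applied to $\prod_t e^{wZ_t}$ then gives, with probability at least $1-\delta$, that $\sum_{s\le t}Z_s\le \tau/w=O(\Delta_1)$ for all $t$ at once. This closes the induction $\Delta_{t+1}\le 2\Delta_1$ and also yields $\frac{\alpha}{4}\sum_t\|\nabla F(\mathbf{x}^t)\|^2\le O(\Delta_1)$. Dividing by $\alpha T$ and substituting the three branches of $\lambda$ gives the stated $\max$ bound with constant $720\sqrt{\Delta_1L}\tau$. The delicate part is the bookkeeping of constants so that every branch of $\lambda$ closes the induction.
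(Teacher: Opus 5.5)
The paper does not prove this statement; it restates Theorem 5.1 of \cite{nguyen2023improved}. Your outline has the same architecture as that source: the clipping lemma with those constants, an induction keeping $\Delta_t\le 2\Delta_1$ so that $\|\nabla F(\mathbf{x}^t)\|\le\lambda/2$, and one exponential supermartingale in place of Freedman plus a union bound. You are also right that the $p$-th moment bound (part 4 of Assumption~\ref{as:gradientnoise}) is implicitly needed. There are, however, three places where the argument as written does not close.

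First, the weight is scaled the wrong way in $\tau$. With $w\propto 1/(\tau\Delta_1)$ you get $\log(1/\delta)/w\asymp\tau^2\Delta_1$, not $O(\Delta_1)$, so $\Delta_{t+1}\le 2\Delta_1$ fails for small $\delta$. You need $w=\Theta(\tau/\Delta_1)$.

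Second, your increment bound $|\alpha\nabla F(\mathbf{x}^t)^\top\boldsymbol{\theta}^u_t|\le\alpha\lambda^2$, obtained from $\|\nabla F(\mathbf{x}^t)\|\le\lambda/2$, is too weak. With $w=\Theta(\tau/\Delta_1)$ it gives $w\alpha\lambda^2\asymp\lambda T^{\frac{1-p}{3p-2}}/\sqrt{L\Delta_1}$. This is unbounded in $T$ for $p<2$, because $\lambda\ge 32^{1/p}\sigma T^{\frac{1}{3p-2}}$, so $e^x\le 1+x+x^2$ is unavailable; your own weight has the same defect. You must instead use the sharper consequence of your induction hypothesis, $\|\nabla F(\mathbf{x}^t)\|\le 2\sqrt{L\Delta_1}$. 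This gives $4w\alpha\lambda\sqrt{L\Delta_1}\le w\Delta_1/(2\tau)=O(1)$, and it is exactly where the factor $\tau$ in $\alpha\lambda=\sqrt{\Delta_1}T^{\frac{1-p}{3p-2}}/(8\sqrt{L}\tau)$ is used.

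Third, Markov's inequality applied to $\prod_{s\le t}e^{wZ_s}$ controls one fixed $t$, not all $t$ at once. Moreover, $\mathbb{E}_t[e^{wZ_t}]\le 1$ has only been verified on $\mathcal{E}_t=\{\Delta_s\le 2\Delta_1 \text{ for all } s\le t\}$, which is precisely what the concentration is meant to deliver, so the argument as stated is circular. Since avoiding the $\log T$ union bound is the whole point of the technique, this must be done explicitly:
\begin{itemize}
\item Because $\mathcal{E}_t\in\mathcal{F}_t$, the truncated increments satisfy $\mathbb{E}_t[e^{wZ_t\mathbf{1}_{\mathcal{E}_t}}]\le 1$ unconditionally, so $\prod_{s\le t}e^{wZ_s\mathbf{1}_{\mathcal{E}_s}}$ is a nonnegative supermartingale.
\item Ville's maximal inequality (equivalently, optional stopping at the first exit time followed by Markov) gives, with probability at least $1-\delta$, $\sum_{s\le t}Z_s\mathbf{1}_{\mathcal{E}_s}\le\log(1/\delta)/w$ for all $t\le T$ simultaneously.
\item On that event, induction shows every $\mathcal{E}_t$ holds, so the indicators can be dropped.
\end{itemize}
With these repairs the rest of your plan goes through: the bias via Young's inequality, the two deterministic sums being small multiples of $\Delta_1$, and dividing by $\alpha T$ to read off the three branches of $\lambda$.
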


\section{Additional Results: Accuracy over Iterations on CIFAR-10 (ResNet-18)}\label{app:numexp}

As already observed, R-SGD-Mini and R-CSGD-Mini achieve the best results; in fact, training could have been stopped earlier, between iterations 8000 and 9000, when a plateau was reached. Additionally, it can be noted that MoM based method exhibits instability for even values of M.

\begin{figure}[tbhp]
		\centering \includegraphics[height=65mm]{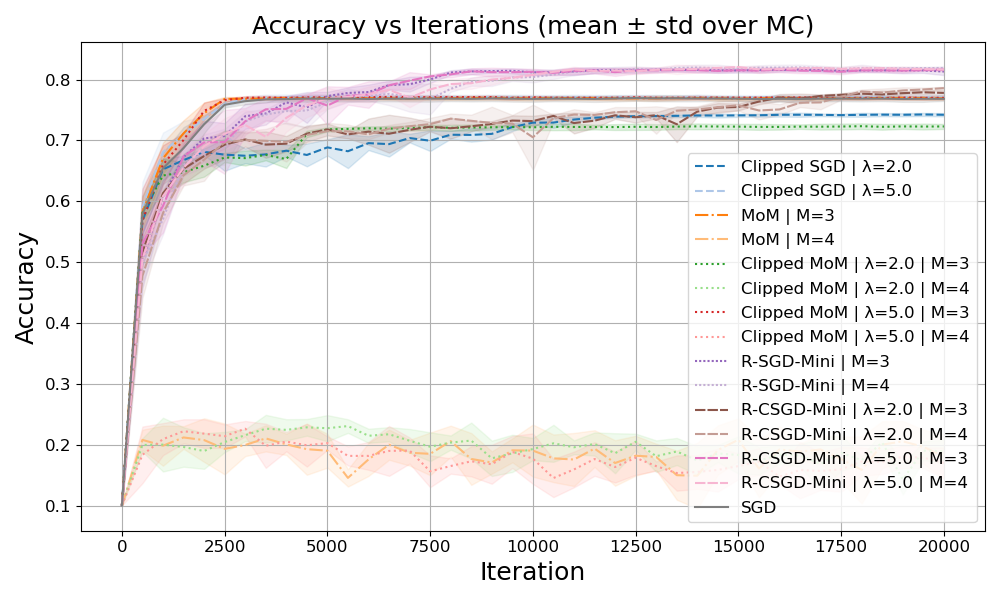}
		\caption{ Test Accuracy over Iterations on CIFAR-10 (ResNet-18)}\label{fig:accuracy}
	\end{figure}


\end{document}